\documentclass{amsart}
\usepackage{amsmath}
\usepackage{amssymb}
\usepackage{amsthm}
\usepackage{epsfig}
\usepackage{amsfonts}
\usepackage{amscd}
\usepackage{mathrsfs}
\usepackage{enumerate}
\usepackage{latexsym}
\usepackage{url}

\newtheorem{theorem}{Theorem}[section]
\newtheorem{lemma}[theorem]{Lemma}

\newtheorem{theoremandnotation}[theorem]{Theorem and Notation}

\newenvironment{proof of claim}{\noindent\textbf{Proof of the claim.}}{\hfill{$\square$}\newline}

\theoremstyle{definition}
\newtheorem{definition}[theorem]{Definition}

\theoremstyle{remark}

\numberwithin{equation}{section}

\begin{document}

\title[On closed non-vanishing ideals in $C_B(X)$ II; compactness properties]{On closed non-vanishing ideals in $C_B(X)$ II; compactness properties}

\author[A. Khademi and M.R. Koushesh]{A. Khademi and M.R. Koushesh$^*$}

\address{\textbf{[First author]} Department of Mathematical Sciences, Isfahan University of Technology, Isfahan 84156--83111, Iran.}

\email{a.khademi@math.iut.ac.ir}

\address{\textbf{[Second author]} Department of Mathematical Sciences, Isfahan University of Technology, Isfahan 84156--83111, Iran, and, School of Mathematics, Institute for Research in Fundamental Sciences (IPM), P.O. Box: 19395--5746, Tehran, Iran.}

\email{koushesh@cc.iut.ac.ir}

\thanks{$^*$Corresponding author}

\thanks{The research of the second author was in part supported by a grant from IPM (No. 96030416).}

\subjclass[2010]{Primary 46J10; Secondary 46J25, 54D35, 54C35.}

\keywords{Stone--\v{C}ech compactification; Algebra of continuous functions.}

\begin{abstract}
For a completely regular space $X$, let $C_B(X)$ be the normed algebra of all bounded continuous scalar-valued mappings on $X$ equipped with pointwise addition and multiplication and the supremum norm and let $C_0(X)$ be its subalgebra consisting of mappings vanishing at infinity. For a non-vanishing closed ideal $H$ of $C_B(X)$ we study properties of its spectrum $\mathfrak{sp}(H)$ which may be characterized as the unique locally compact (Hausdorff) space $Y$ such that $H$ and $C_0(Y)$ are isometrically isomorphic. We concentrate on compactness properties of $\mathfrak{sp}(H)$ and find necessary and sufficient (algebraic) conditions on $H$ such that the spectrum $\mathfrak{sp}(H)$ satisfies (topological) properties such as the Lindel\"{o}f property, $\sigma$-compactness, countable compactness, pseudocompactness and paracompactness.
\end{abstract}

\maketitle


\section{Introduction}

A \textit{space} means a topological space. We adopt the definitions of \cite{E}, in particular, completely regular spaces, compact spaces, locally compact spaces, $\sigma$-compact spaces, countably compact spaces and paracompact spaces are Hausdorff, Lindel\"{o}f spaces are regular, and pseudocompact spaces are completely regular. The field of scalars, which is fixed throughout, is $\mathbb{R}$ or $\mathbb{C}$, and is denoted by $\mathbb{F}$. For a space $X$ the set of all bounded continuous scalar-valued mappings on $X$ is denoted by $C_B(X)$. The set $C_B(X)$ is a Banach algebra when equipped with pointwise addition and multiplication and the supremum norm. The normed subalgebra of $C_B(X)$ consisting of mappings which vanish at infinity is denoted by $C_0(X)$.

In \cite{K4}, motivated by previous studies in \cite{K1}--\cite{K3}, the second author has studied closed ideals $H$ of $C_B(X)$ which are non-vanishing, in the sense that, for every element $x$ of $X$ there is an element of $H$ which does not vanish at $x$. (Here $X$ is assumed to be a completely regular space which is not necessarily a locally compact space.) This has been done by studying the spectrum $\mathfrak{sp}(H)$ of $H$, i.e., the unique locally compact space $Y$ such that $H$ and $C_0(Y)$ are isometrically isomorphic. The spectrum $\mathfrak{sp}(H)$ has been constructed as a subspace of the Stone--\v{C}ech compactification of $X$. The study in \cite{K4} has been continued in \cite{KK} where we have considered connected properties of $\mathfrak{sp}(H)$. This includes finding necessary and sufficient (algebraic) conditions on $H$ such that the spectrum $\mathfrak{sp}(H)$ satisfies connectedness (topological) properties such as locally connectedness, total disconnectedness, zero-dimensionality, strong zero-dimensionality, total separatedness and extremal disconnectedness. The purpose of this article is to undertake a similar study by finding necessary and sufficient conditions on $H$ such that the spectrum $\mathfrak{sp}(H)$ satisfies compactness properties such as the Lindel\"{o}f property, $\sigma$-compactness, countable compactness, pseudocompactness and paracompactness.

We use the theory of the Stone--\v{C}ech compactification as the tool. We mention the basic definitions in the following and refer the reader to \cite{GJ} for further information.

\subsection*{The Stone--\v{C}ech compactification}

Let $X$ be a completely regular space. A \textit{compactification} of $X$ is a compact space $\alpha X$ which contains $X$ as a dense subspace. The \textit{Stone--\v{C}ech compactification} of $X$ (denoted by $\beta X$) is the ``largest'' compactification of $X$ which is characterized by the property that every bounded continuous mapping $f:X\rightarrow\mathbb{F}$ is extendible to a continuous mapping $F:\beta X\rightarrow\mathbb{F}$. For every bounded continuous mapping $f:X\rightarrow\mathbb{F}$ we denote by $f_\beta$ or $f^\beta$ the (unique) continuous extension of $f$ to $\beta X$. Note that for every continuous mappings $f,g:X\rightarrow\mathbb{F}$ and a scalar $r$ we have $(f+g)_\beta=f_\beta+g_\beta$, $(fg)_\beta=f_\beta g_\beta$, $(rf)_\beta=rf_\beta$, $\overline{f}_\beta=\overline{f_\beta}$, $|f|_\beta=|f_\beta|$ and $\|f_\beta\|=\|f\|$.

\section{Preliminaries}

In \cite{K4}, the second author has studied closed non-vanishing ideals $H$ of $C_B(X)$, where $X$ is a completely regular space, by studying their spectrum $\mathfrak{sp}(H)$. (By $H$ being \textit{non-vanishing} it is meant that for every element $x$ of $X$ there is an element of $H$ which does not vanish at $x$.) The spectrum $\mathfrak{sp}(H)$ has a simple description as a subspace of the Stone--\v{C}ech compactification of $X$, as described in the following theorem quoted from \cite{K4}.

For a mapping $f:Y\rightarrow\mathbb{F}$ the \textit{cozeroset} of $f$ is defined as the set of all $y$ in $Y$ such that $f(y)\neq 0$, and is denoted by $\mathrm{coz}(f)$.

\begin{theoremandnotation}\label{TRES}
Let $X$ be a completely regular space. Let $H$ be a non-vanishing closed ideal in $C_B(X)$. The spectrum of $H$ is defined as
\[\mathfrak{sp}(H)=\bigcup_{h\in H}\mathrm{coz}(h_\beta).\]
The spectrum $\mathfrak{sp}(H)$ is an open subspace of $\beta X$, and thus, is a locally compact space. Also, $\mathfrak{sp}(H)$ contains $X$ as a dense subspace. Further, $H$ and $C_0(\mathfrak{sp}(H))$ are isometrically isomorphic. The spectrum $\mathfrak{sp}(H)$ is therefore characterized as the unique locally compact space $Y$ such that $H$ and $C_0(Y)$ are isometrically isomorphic. In particular, $\mathfrak{sp}(H)$ coincides with the Gelfand spectrum of $H$, in the usual sense, when the field of scalars is $\mathbb{C}$.
\end{theoremandnotation}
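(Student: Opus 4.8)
The plan is to put $Y=\mathfrak{sp}(H)$ and verify the assertions in the stated order, the topological ones first. Each $\mathrm{coz}(h_\beta)$ is open in $\beta X$ since $h_\beta$ is continuous, so $Y$, being a union of such sets, is open in the compact Hausdorff space $\beta X$; an open subspace of a compact Hausdorff space is locally compact, hence $Y$ is locally compact. For $X\subseteq Y$: given $x\in X$, non-vanishing of $H$ yields $h\in H$ with $h(x)\neq 0$, so $h_\beta(x)=h(x)\neq 0$ and $x\in\mathrm{coz}(h_\beta)\subseteq Y$. Density of $X$ in $Y$ then follows from density of $X$ in $\beta X$ together with $X\subseteq Y\subseteq\beta X$, because the closure of $X$ in $Y$ equals $\mathrm{cl}_{\beta X}(X)\cap Y=\beta X\cap Y=Y$.

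The core is the isometric isomorphism $H\cong C_0(Y)$. I would define $\Phi\colon H\to C_0(Y)$ by $\Phi(h)=h_\beta|_Y$. First, $\Phi$ is well defined: for $\varepsilon>0$ the set $\{p\in\beta X:|h_\beta(p)|\geq\varepsilon\}$ is closed in the compact space $\beta X$, hence compact, and it lies inside $\mathrm{coz}(h_\beta)\subseteq Y$, so it is a compact subset of $Y$; thus $h_\beta|_Y$ vanishes at infinity on $Y$. That $\Phi$ is an algebra homomorphism is immediate from the identities $(f+g)_\beta=f_\beta+g_\beta$, $(fg)_\beta=f_\beta g_\beta$, $(rf)_\beta=rf_\beta$. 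It is isometric because $h_\beta$ vanishes off $\mathrm{coz}(h_\beta)$ and $\beta X\setminus Y\subseteq\beta X\setminus\mathrm{coz}(h_\beta)$, so $\sup_Y|h_\beta|=\sup_{\beta X}|h_\beta|=\|h_\beta\|=\|h\|$; in particular $\Phi$ is injective. Since $H$ is a closed subalgebra of the Banach algebra $C_B(X)$ it is complete, so $\Phi(H)$ is a complete, hence closed, subalgebra of $C_0(Y)$.

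It remains to show $\Phi(H)=C_0(Y)$, which I would get from the locally compact Stone--Weierstrass theorem: a closed subalgebra of $C_0(Y)$ that vanishes nowhere and separates points (and, over $\mathbb{C}$, is conjugation-closed) is all of $C_0(Y)$. Vanishing nowhere is precisely the definition of $Y=\bigcup_{h\in H}\mathrm{coz}(h_\beta)$. Point separation is the only place needing a small argument: given distinct $y_1,y_2\in Y$, choose $h\in H$ with $h_\beta(y_1)\neq 0$; if $h_\beta(y_1)\neq h_\beta(y_2)$ we are done, and otherwise pick $g\in C_B(X)$ with $g_\beta(y_1)\neq g_\beta(y_2)$ (possible since continuous functions on the compact Hausdorff space $\beta X$ separate its points and each restricts to an element of $C_B(X)$), and use $gh\in H$, whose extension $g_\beta h_\beta$ takes the values $g_\beta(y_i)h_\beta(y_1)$ at $y_i$, which differ because $h_\beta(y_1)\neq 0$. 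In the complex case conjugation-closedness of $\Phi(H)$ follows since a closed ideal of the commutative $C^*$-algebra $C_B(X)$ is automatically self-adjoint. This establishes $H\cong C_0(Y)$.

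Finally, uniqueness and the Gelfand identification. If $Y'$ is any locally compact Hausdorff space with $H\cong C_0(Y')$ isometrically isomorphically, then $C_0(Y)\cong C_0(Y')$ as ($*$-)algebras, and the commutative Gelfand--Naimark theorem (equivalently, the $C_0$ Banach--Stone theorem) recovers $Y$ and $Y'$ as the spaces of nonzero multiplicative linear functionals with the weak$^*$ topology, forcing $Y\cong Y'$; so $\mathfrak{sp}(H)$ is unique up to homeomorphism. In the complex case this same description identifies the Gelfand spectrum of $H$ with that of $C_0(Y)$, namely $Y$. I expect the surjectivity of $\Phi$ to be the main obstacle, specifically assembling the Stone--Weierstrass hypotheses --- above all point separation --- purely from the algebraic data of the ideal $H$; the rest is routine once $Y$ is set up.
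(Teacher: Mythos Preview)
Your argument is correct and complete. Note, however, that the present paper does not actually prove this statement: it is quoted verbatim from \cite{K4} as background, so there is no proof in this paper to compare against. Your approach --- restrict $h_\beta$ to $Y$, check that the image lands in $C_0(Y)$ isometrically, then invoke the locally compact Stone--Weierstrass theorem for surjectivity, with point separation coming from the ideal property via the product $gh$ --- is the standard one and is essentially what one finds in \cite{K4}. The self-adjointness of $H$ in the complex case and the uniqueness via Gelfand--Naimark (or real Banach--Stone) are handled correctly as well.
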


The simple representation of $\mathfrak{sp}(H)$ in the above theorem enables studying its properties by relating (topological) properties of $\mathfrak{sp}(H)$ to (algebraic) properties of $H$. This has been done in \cite{K4} and later in \cite{KK}, where in the latter the authors have concentrated on the study of various connectedness properties of $\mathfrak{sp}(H)$. Among results of this type we mention the following two theorems quoted from \cite{K4} and \cite{KK}, respectively.

\begin{theorem}\label{HJGS}
Let $X$ be a completely regular space. Let $H$ be a non-vanishing closed ideal in $C_B(X)$. The following are equivalent:
\begin{itemize}
\item[\rm(1)] $\mathfrak{sp}(H)$ is connected.
\item[\rm(2)] $H$ is indecomposable, that is
\[H\neq I\oplus J\]
for any non-zero ideals $I$ and $J$ of $C_B(X)$.
\end{itemize}
\end{theorem}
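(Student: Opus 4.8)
The plan is to prove both implications by contraposition, using only the description $\mathfrak{sp}(H)=\bigcup_{h\in H}\mathrm{coz}(h_\beta)$ from Theorem~\ref{TRES} together with the facts that $\mathfrak{sp}(H)$ is open in $\beta X$ and contains $X$ as a dense subspace. The guiding idea is a dictionary: a splitting of $H$ into a direct sum of two ideals corresponds to a partition of $\mathfrak{sp}(H)$ into two disjoint non-empty open pieces, which one realizes concretely inside $\beta X$ as unions of cozero-sets of the relevant extensions.

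For the implication (2) $\Rightarrow$ (1), I would argue its contrapositive. Suppose $\mathfrak{sp}(H)$ is disconnected, say $\mathfrak{sp}(H)=U\cup V$ with $U,V$ disjoint, non-empty, and open in $\mathfrak{sp}(H)$ (hence open in $\beta X$). Put $I=\{h\in C_B(X):\mathrm{coz}(h_\beta)\subseteq U\}$ and $J=\{h\in C_B(X):\mathrm{coz}(h_\beta)\subseteq V\}$. Using $\mathrm{coz}((hg)_\beta)=\mathrm{coz}(h_\beta)\cap\mathrm{coz}(g_\beta)$ and $\mathrm{coz}((h_1+h_2)_\beta)\subseteq\mathrm{coz}((h_1)_\beta)\cup\mathrm{coz}((h_2)_\beta)$, one checks immediately that $I$ and $J$ are ideals of $C_B(X)$ contained in $H$, and that $I\cap J=\{0\}$ since a function with empty cozero-set vanishes. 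The crux is $I+J=H$: given $h\in H$, I would verify that $h_\beta\cdot\mathbf{1}_U$ is continuous on $\beta X$ --- it equals $h_\beta$ on the open set $U$, equals $0$ on the open set $V$, and is squeezed to $0$ near any point of $\beta X\setminus\mathfrak{sp}(H)$ because $h_\beta$ itself vanishes there --- so $h_1:=(h_\beta\mathbf{1}_U)|_X$ lies in $I$; symmetrically $h_2:=(h_\beta\mathbf{1}_V)|_X$ lies in $J$, and $h=h_1+h_2$ on $X$ since $X\subseteq U\cup V$. Applying this to an $h\in H$ whose extension is non-zero at a chosen point of $U$ (such $h$ exists as $U\subseteq\mathfrak{sp}(H)$) shows $I\ne\{0\}$, and likewise $J\ne\{0\}$; hence $H=I\oplus J$ is a non-trivial decomposition.

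For (1) $\Rightarrow$ (2), again by contraposition, assume $H=I\oplus J$ with $I,J$ non-zero ideals of $C_B(X)$. Since $I$ and $J$ are ideals of $C_B(X)$ with $I\cap J=\{0\}$, for all $f\in I$ and $g\in J$ we have $fg\in I\cap J=\{0\}$, so $f_\beta g_\beta=0$, i.e. $\mathrm{coz}(f_\beta)\cap\mathrm{coz}(g_\beta)=\emptyset$. Set $U=\bigcup_{f\in I}\mathrm{coz}(f_\beta)$ and $V=\bigcup_{g\in J}\mathrm{coz}(g_\beta)$: these are disjoint open subsets of $\beta X$ contained in $\mathfrak{sp}(H)$, and since $H=I+J$ every $h\in H$ satisfies $\mathrm{coz}(h_\beta)\subseteq U\cup V$, whence $\mathfrak{sp}(H)=U\cup V$. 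As $I\ne\{0\}\ne J$, both $U$ and $V$ are non-empty, so $\{U,V\}$ disconnects $\mathfrak{sp}(H)$; equivalently, if $\mathfrak{sp}(H)$ is connected then $H$ is indecomposable.

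I expect the only real obstacle to be the continuity of $h_\beta\cdot\mathbf{1}_U$ on $\beta X$ in the step $I+J=H$ above: the sets $U$ and $V$ are clopen only relative to $\mathfrak{sp}(H)$, not relative to $\beta X$, so the splitting $h=h_1+h_2$ is not a formality and genuinely uses that members of $H$ extend over $\beta X$ to functions vanishing off the open set $\mathfrak{sp}(H)$ (Theorem~\ref{TRES}). The remaining points are routine verifications of the ideal axioms and elementary bookkeeping with cozero-sets and the extension operator $f\mapsto f_\beta$.
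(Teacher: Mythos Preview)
The paper does not prove this theorem: it is quoted from \cite{K4} in the preliminaries section as background, so there is no in-paper proof to compare against.

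Your argument is correct and is the natural one. One small point to tighten: in the direction (2)$\Rightarrow$(1) you define $I=\{h\in C_B(X):\mathrm{coz}(h_\beta)\subseteq U\}$ and assert that $I\subseteq H$ ``immediately''. This is true but not purely formal; it needs the fact that any $f\in C_B(X)$ with $\mathrm{coz}(f_\beta)\subseteq\mathfrak{sp}(H)$ already lies in $H$. That follows either from the isometric isomorphism $H\cong C_0(\mathfrak{sp}(H))$ in Theorem~\ref{TRES}, or by observing that $H'=\{f\in C_B(X):\mathrm{coz}(f_\beta)\subseteq\mathfrak{sp}(H)\}$ is a closed ideal with $\lambda_{H'}X=\lambda_HX$ and invoking Lemma~\ref{HOOF}. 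Alternatively, you could simply define $I=\{h\in H:\mathrm{coz}(h_\beta)\subseteq U\}$ and $J$ similarly, which sidesteps the issue and changes nothing else in your argument. The continuity of $h_\beta\cdot\mathbf{1}_U$ on $\beta X$, which you correctly flag as the only substantive step, is handled well.
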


\begin{theorem}\label{KHD}
Let $X$ be a completely regular space. Let $H$ be a non-vanishing closed ideal in $C_B(X)$. The following are equivalent:
\begin{itemize}
\item[\rm(1)] $\mathfrak{sp}(H)$ is locally connected.
\item[\rm(2)] For every closed subideal $G$ of $H$ we have
\[G=\overline{\bigoplus_{i\in I}G_i},\]
where $\{G_i:i\in I\}$ is a collection of indecomposable closed ideals of $C_B(X)$.
\end{itemize}
Here the bar denotes the closure in $C_B(X)$.
\end{theorem}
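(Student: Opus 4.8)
The plan is to translate both conditions into statements about the open subspace $Y:=\mathfrak{sp}(H)$ of $\beta X$ by means of the correspondence between closed ideals and open sets. Recall (this extends Theorem \ref{TRES} and is available in \cite{K4}, and is the standard hull--kernel description of the closed ideals of $C_B(X)\cong C(\beta X)$) that, writing $I_U=\{f\in C_B(X):f_\beta\equiv 0\text{ on }\beta X\setminus U\}$ for an open set $U\subseteq\beta X$, the map $U\mapsto I_U$ is a bijection from the open subsets of $\beta X$ onto the closed ideals of $C_B(X)$, with inverse $I\mapsto\mathfrak{sp}(I)$, and $I_U\cong C_0(U)$ isometrically; moreover $H=I_Y$, and since $Y$ is open in $\beta X$ one checks easily that $I_U\subseteq H$ exactly when $U\subseteq Y$. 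Thus the closed subideals $G$ of $H$ are precisely the $I_U$ with $U$ an open subset of $Y$, and the proof reduces to recasting (2) under this dictionary.

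Two computations drive the argument. (i) For open $V,W\subseteq Y$ one has $I_V\cap I_W=I_{V\cap W}$, and if $V\cap W=\varnothing$ then $V$ is clopen in $V\cup W$, so each $f\in I_{V\cup W}$ is the sum of its restrictions to $V$ and to $W$ (each extended by $0$), giving $I_{V\cup W}=I_V\oplus I_W$; combining these, and using that $C_0(U)$ has no non-zero nilpotents, one gets exactly as in Theorem \ref{HJGS} that a closed subideal $G=I_U$ of $H$ is indecomposable if and only if $U$ is connected. (ii) For a pairwise disjoint family $\{U_\lambda\}_{\lambda\in\Lambda}$ of open subsets of $Y$, the ideals $I_{U_\lambda}$ are independent, the algebraic direct sum $\bigoplus_\lambda I_{U_\lambda}$ is the set of $f\in C_B(X)$ whose cozero set meets only finitely many $U_\lambda$, and its closure in $C_B(X)$ is $I_{\bigcup_\lambda U_\lambda}$; the only inclusion needing an argument uses that for $f\in C_0(\bigcup_\lambda U_\lambda)$ and $\varepsilon>0$ the compact set $\{|f_\beta|\geq\varepsilon\}$ is contained in a finite union of the $U_\lambda$, so $f$ lies within $\varepsilon$ of an element of the direct sum. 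Conversely, any $\overline{\bigoplus_\lambda G_\lambda}$ with the $G_\lambda$ indecomposable closed subideals of $H$ is of this shape: by (i) each $G_\lambda=I_{U_\lambda}$ with $U_\lambda$ connected open, and independence of $\{G_\lambda\}$ forces the $U_\lambda$ to be pairwise disjoint.

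By (i) and (ii), condition (2) is equivalent to the topological assertion that every open subset of $Y$ is the union of a pairwise disjoint family of connected open subsets of $Y$, and it remains to see that this is equivalent to local connectedness of $Y$. If $Y$ is locally connected, the connected components of any open $U\subseteq Y$ are open and partition $U$, so (2) holds. Conversely, assume every open $U$ can be written as $U=\bigcup_\lambda U_\lambda$ with the $U_\lambda$ pairwise disjoint, connected and open; then a connected $U_\lambda$ lies in a single component of $U$, and if $C$ is a component of $U$ every point of $C$ lies in some $U_\lambda$, which must then be contained in $C$, so $C=\bigcup\{U_\lambda:U_\lambda\subseteq C\}$ is open — hence every open set has open components and $Y$ is locally connected. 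I expect the main obstacle to be the closure computation in (ii), namely verifying that $\overline{\bigoplus_\lambda I_{U_\lambda}}$ exhausts $I_{\bigcup_\lambda U_\lambda}$ rather than a proper subideal, together with the care needed to read ``$\bigoplus$'' as an \emph{independent} family, which is precisely what delivers the pairwise disjointness used in the converse; the remaining ingredients are the standard ideal/open-set correspondence for $C(\beta X)$, Theorem \ref{HJGS}, and the elementary equivalence ``$Y$ locally connected $\Leftrightarrow$ components of open sets are open''.
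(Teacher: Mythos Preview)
The paper does not contain a proof of this theorem: it is quoted from \cite{KK} as background (see the sentence immediately preceding Theorems~\ref{HJGS} and~\ref{KHD}), so there is no in-paper proof to compare against. That said, your argument is correct and is built out of exactly the ideal/open-set dictionary that the present paper imports from \cite{KK} and uses throughout (Lemmas~\ref{UEED}, \ref{HOOF}, \ref{JH}, \ref{HJGF}, \ref{HFDS}); the three moves you isolate --- indecomposable closed ideals correspond to connected open sets (your (i), the subideal version of Theorem~\ref{HJGS}), closures of independent sums correspond to disjoint unions (your (ii)), and ``every open set is a disjoint union of open connected sets'' is equivalent to local connectedness --- are the natural ones, and your care with the closure computation in (ii) (compact level sets meeting only finitely many $U_\lambda$, and continuity of the truncations because boundary points of $U_\lambda$ lie outside $U$) is exactly what is needed. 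This is almost certainly the same proof as in \cite{KK}.
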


The purpose of this article is to study compactness properties of the spectrum $\mathfrak{sp}(H)$ of non-vanishing closed ideals $H$ of $C_B(X)$. Examples of non-vanishing closed ideals of $C_B(X)$ are described in \cite{K4}. (See \cite{FK1}--\cite{FK2} and \cite{K1}--\cite{K3} for other relevant results and examples.)

\section{Compactness properties of the spectrum}

The purpose of this section is to study non-vanishing closed ideals $H$ of $C_B(X)$, where $X$ is a completely regular space, by relating algebraic properties of $H$ to topological properties of the spectrum $\mathfrak{sp}(H)$. We will concentrate on compactness properties, such as the Lindel\"{o}f property, $\sigma$-compactness, countable compactness, pseudocompactness and paracompactness.

We will need a definition and a few lemmas from \cite{K4} and \cite{KK}. These we list in the following and refer the reader to \cite{K4} and \cite{KK} for missing proofs.

\begin{definition}\label{GHG}
Let $X$ be a completely regular space. For an ideal $G$ of $C_B(X)$ let
\[\lambda_GX=\bigcup_{g\in G}\mathrm{coz}(g_\beta).\]
\end{definition}

Note that by Theorem \ref{TRES}, for a non-vanishing closed ideal $H$ of $C_B(X)$ (where $X$ is a completely regular space) we have $\mathfrak{sp}(H)=\lambda_HX$.

The next three lemmas (with Lemma \ref{JH} slightly modified) may be found in \cite{KK}.

\begin{lemma}\label{UEED}
Let $X$ be a completely regular space. Let $G$ be an ideal of $C_B(X)$. Then
\[\lambda_{\overline{G}}X=\lambda_GX.\]
Here the bar denotes the closure in $C_B(X)$.
\end{lemma}

\begin{lemma}\label{HOOF}
Let $X$ be a completely regular space. Let $G_1$ and $G_2$ be closed ideals of $C_B(X)$ such that $\lambda_{G_1}X=\lambda_{G_2}X$. Then $G_1=G_2$.
\end{lemma}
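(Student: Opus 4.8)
The plan is to prove the stronger statement that an arbitrary closed ideal $G$ of $C_B(X)$ is recovered from $\lambda_GX$ by the explicit formula
\[G=\big\{g\in C_B(X):\mathrm{coz}(g_\beta)\subseteq\lambda_GX\big\}.\]
Once this is available the lemma follows at once: if $\lambda_{G_1}X=\lambda_{G_2}X$, then the right-hand side of the formula is literally the same set for $G_1$ and for $G_2$, so $G_1=G_2$. (Since $G_1$ and $G_2$ are assumed closed, Lemma \ref{UEED} plays no role here; I also note that the same argument, together with a Urysohn argument for surjectivity, exhibits $G\mapsto\lambda_GX$ as an order isomorphism of the closed ideals of $C_B(X)$ onto the open subsets of $\beta X$.)

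The inclusion $\subseteq$ in the displayed formula is immediate from Definition \ref{GHG}. For the reverse inclusion I would take $g\in C_B(X)$ with $\mathrm{coz}(g_\beta)\subseteq\lambda_GX$ and, since $G$ is closed, approximate $g$ in norm by elements of $G$. Fix $\epsilon>0$ and put $K=\{z\in\beta X:|g_\beta(z)|\ge\epsilon\}$. Then $K$ is closed in $\beta X$, hence compact, and $K\subseteq\mathrm{coz}(g_\beta)\subseteq\lambda_GX=\bigcup_{h\in G}\mathrm{coz}(h_\beta)$, so compactness yields $h_1,\dots,h_n\in G$ with $K\subseteq\mathrm{coz}((h_1)_\beta)\cup\cdots\cup\mathrm{coz}((h_n)_\beta)$. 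Set $h=h_1\overline{h_1}+\cdots+h_n\overline{h_n}$; since each $\overline{h_i}\in C_B(X)$ and $G$ is an ideal, $h\in G$, and using $(fg)_\beta=f_\beta g_\beta$, $(f+g)_\beta=f_\beta+g_\beta$ and $\overline{f}_\beta=\overline{f_\beta}$ one obtains $h_\beta=|(h_1)_\beta|^2+\cdots+|(h_n)_\beta|^2\ge0$ together with $\mathrm{coz}(h_\beta)=\mathrm{coz}((h_1)_\beta)\cup\cdots\cup\mathrm{coz}((h_n)_\beta)\supseteq K$. Being continuous and strictly positive on the compact set $K$, $h_\beta$ satisfies $h_\beta\ge\delta$ on $K$ for some $\delta>0$.

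It remains to build the approximant. For $\delta'>0$ the function $(h+\delta')^{-1}$ lies in $C_B(X)$, its denominator being bounded below by $\delta'$, so $\rho:=h(h+\delta')^{-1}\in G$. Then $\rho_\beta=h_\beta(h_\beta+\delta')^{-1}$, hence $1-\rho_\beta=\delta'(h_\beta+\delta')^{-1}$, which is $\le1$ throughout $\beta X$ and is $\le\delta'/\delta$ on $K$. Splitting the supremum in
\[\|g-g\rho\|=\sup_{\beta X}|g_\beta|\,|1-\rho_\beta|\]
over $K$ and its complement, on which $|g_\beta|<\epsilon$, then gives $\|g-g\rho\|\le\max\{\|g\|\delta'/\delta,\epsilon\}$, which is $\le\epsilon$ once $\delta'$ is small enough. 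As $g\rho\in G$ and $\epsilon>0$ was arbitrary, $g\in\overline{G}=G$, which proves the displayed formula and with it the lemma. I expect the only genuinely substantive step to be precisely this construction — producing, inside the closed ideal $G$, a function that is essentially equal to $1$ on the part of $\beta X$ where $|g_\beta|$ is not small; the rest is routine bookkeeping with Stone--\v{C}ech extensions and with compactness of $\beta X$.
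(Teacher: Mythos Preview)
Your argument is correct. The paper itself does not prove this lemma; it merely cites \cite{KK} for the proof, so there is no in-paper proof to compare against. Your approach---recovering a closed ideal $G$ from $\lambda_GX$ via the formula $G=\{g\in C_B(X):\mathrm{coz}(g_\beta)\subseteq\lambda_GX\}$ by building an approximate identity $\rho=h(h+\delta')^{-1}$ inside $G$---is the standard one and is carried out cleanly; the only minor point worth making explicit is that $h_\beta\ge0$ on all of $\beta X$ (not just on $X$), which you use when bounding $1-\rho_\beta$ off $K$, and which follows from $h_\beta=\sum_i|(h_i)_\beta|^2$.
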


\begin{lemma}\label{JH}
Let $X$ be a completely regular space. Let $\{G_i:i\in I\}$ be a collection of ideals of $C_B(X)$. Then
\[\lambda_{\sum_{i\in I}G_i}X=\bigcup_{i\in I}\lambda_{G_i}X.\]
\end{lemma}

The following lemma is proved in \cite{K4}.

\begin{lemma}\label{JJHF}
Let $X$ be a completely regular space. Let $f$ be in $C_B(X)$. Let $f_1,f_2,\ldots$ be a sequence in $C_B(X)$ such that
\[|f|^{-1}\big([1/n,\infty)\big)\subseteq |f_n|^{-1}\big([1,\infty)\big)\]
for every positive integer $n$. Then $g_nf_n\rightarrow f$ for some sequence $g_1,g_2,\ldots$ in $C_B(X)$.
\end{lemma}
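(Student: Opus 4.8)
The plan is to build each $g_n$ by dividing $f$ by $f_n$ on the part of $X$ where this is legitimate and patching with $0$ elsewhere, using a continuous cut-off to keep the patched function continuous. The hypothesis says precisely that on the set $A_n=\{x\in X:|f(x)|\ge 1/n\}$ one has $|f_n|\ge 1$, so there $f/f_n$ is continuous and bounded in modulus by $\|f\|$; the only difficulty is that $f/f_n$ need not be defined, or stay bounded, where $f_n$ is small, so $g_n$ must be arranged to vanish before $f_n$ approaches $0$.

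Concretely, for each $n$ I would consider $A_n=\{|f|\ge 1/n\}$ and $B_n=\{|f_n|\le 1/2\}$. Both are zero-sets of $X$: $A_n$ is the zero-set of the continuous map $\max\{1/n-|f|,0\}$, and $B_n$ that of $\max\{|f_n|-1/2,0\}$. They are disjoint, because if $|f(x)|\ge 1/n$ then the hypothesis forces $|f_n(x)|\ge 1>1/2$. Since disjoint zero-sets of a space are completely separated (see \cite{GJ}), I may pick a continuous $\psi_n:X\to[0,1]$ with $\psi_n\equiv 1$ on $A_n$ and $\psi_n\equiv 0$ on $B_n$. Then I define $g_n$ to be $\psi_n f/f_n$ on the open set $\{f_n\ne 0\}$ and $0$ on the open set $\{|f_n|<1/2\}$. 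These two open sets cover $X$; on their overlap (which lies in $B_n$) $\psi_n$ vanishes, so the two formulas agree and $g_n$ is a well-defined continuous function on $X$. Moreover $\psi_n\ne 0$ forces $|f_n|>1/2$, whence $|g_n|\le 2\|f\|$, so $g_n\in C_B(X)$.

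It then remains to check that $g_nf_n\to f$. From the two defining charts one gets $g_nf_n=\psi_n f$ everywhere on $X$ (on $\{|f_n|<1/2\}$ both sides are $0$, since $\psi_n=0$ there). Therefore $\|f-g_nf_n\|=\|(1-\psi_n)f\|$; but $1-\psi_n$ vanishes on $A_n$, while off $A_n$ one has $|f|<1/n$, so $\|(1-\psi_n)f\|\le 1/n\to 0$, which gives $g_nf_n\to f$.

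The one genuinely delicate choice is the ``off'' set $B_n$. Taking it to be $\{|f_n|\le 1/2\}$ — rather than the zero-set of $f_n$, or a sub-level set of $|f|$ — is exactly what keeps $g_n$ bounded and makes it continuous through the two-chart definition, and at the same time $B_n$ is automatically disjoint from $A_n$ by the hypothesis, so $\psi_n$ can be produced from the elementary fact about disjoint zero-sets without invoking any normality of $X$. Everything else in the argument is routine.
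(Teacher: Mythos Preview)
Your argument is correct. The paper itself does not prove this lemma; it merely quotes it from \cite{K4}, so there is no in-paper proof to compare against. That said, your construction is the standard one: produce a cut-off $\psi_n$ separating the zero-sets $\{|f|\ge 1/n\}$ and $\{|f_n|\le 1/2\}$, set $g_n=\psi_n f/f_n$ where $f_n\ne 0$ and $g_n=0$ where $|f_n|<1/2$, and observe that $g_nf_n=\psi_nf$ everywhere, whence $\|f-g_nf_n\|\le 1/n$. Every step you claim checks out, including the boundedness bound $|g_n|\le 2\|f\|$ and the use of complete separation of disjoint zero-sets (which is indeed available in any space, without normality). This is essentially how the lemma is established in \cite{K4} as well, so there is no meaningful divergence in approach.
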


Our first theorem deals with the Lindel\"{o}f property and $\sigma$-compactness. Recall that a regular space is called \textit{Lindel\"{o}f} if every open cover of it has a countable subcover. Also, a Hausdorff space is called \textit{$\sigma$-compact} if it is a countable union of compact subspaces. It is clear that every $\sigma$-compact regular space is a Lindel\"{o}f space. The converse also holds if the space is locally compact. (See Exercise 3.8.C(b) of \cite{E}.)

In the sequel we denote the ideal generated by an element $g$ of $C_B(X)$ by $\langle g\rangle$ (where $X$ is a completely regular space).

\begin{lemma}\label{JHFF}
Let $X$ be a completely regular space. For a closed ideal $G$ of $C_B(X)$ the following are equivalent:
\begin{itemize}
\item[(1)] $\lambda_GX$ is Lindel\"{o}f.
\item[(2)] $\lambda_GX$ is $\sigma$-compact.
\item[(3)] $G=\overline{\langle g\rangle}$ for some $g$ in $G$.
\end{itemize}
Here the bar denotes the closure in $C_B(X)$.
\end{lemma}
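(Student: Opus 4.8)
The plan is to prove the cycle (3)$\Rightarrow$(2)$\Rightarrow$(1)$\Rightarrow$(3), using the identification $\lambda_GX=\mathrm{coz}(g_\beta)$ when $G=\overline{\langle g\rangle}$ and the basic fact, recorded before the lemma, that a locally compact space is $\sigma$-compact iff it is Lindel\"of. The implication (2)$\Rightarrow$(1) is immediate since $\lambda_GX$ is open in $\beta X$, hence locally compact, and every $\sigma$-compact space is Lindel\"of.

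For (3)$\Rightarrow$(2), I would first note that by Lemma \ref{UEED} and Lemma \ref{JH} we have $\lambda_GX=\lambda_{\langle g\rangle}X=\bigcup_{f\in C_B(X)}\mathrm{coz}((fg)_\beta)$, and since each $\mathrm{coz}((fg)_\beta)\subseteq\mathrm{coz}(g_\beta)$ while $g\in\langle g\rangle$ gives the reverse containment, in fact $\lambda_GX=\mathrm{coz}(g_\beta)$. Now $\mathrm{coz}(g_\beta)=\bigcup_{n=1}^\infty |g_\beta|^{-1}([1/n,\infty))$, and each set $|g_\beta|^{-1}([1/n,\infty))$ is a closed subspace of the compact space $\beta X$, hence compact. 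Therefore $\lambda_GX$ is a countable union of compact subspaces, i.e.\ $\sigma$-compact.

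For (1)$\Rightarrow$(3), suppose $\lambda_GX$ is Lindel\"of. Since it is also locally compact, it is $\sigma$-compact, so write $\lambda_GX=\bigcup_{n=1}^\infty K_n$ with each $K_n$ compact; replacing $K_n$ by $K_1\cup\cdots\cup K_n$ we may assume the $K_n$ are increasing. Because $\lambda_GX=\bigcup_{g'\in G}\mathrm{coz}(g'_\beta)$ is an open cover of the compact set $K_n$ by cozero-sets, finitely many suffice, and since $G$ is an ideal (closed under finite sums of the form $\sum |h_i|^2$, after passing to $\overline h_i h_i$) one can find a single $g_n\in G$ with $K_n\subseteq\mathrm{coz}((g_n)_\beta)$; normalizing and using compactness of $K_n$ we may even arrange $K_n\subseteq |(g_n)_\beta|^{-1}([1,\infty))$. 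Setting $g=\sum_{n=1}^\infty 2^{-n}g_n/(1+\|g_n\|)$, which lies in the closed ideal $G$, one checks that $\mathrm{coz}(g_\beta)\supseteq\bigcup_n K_n=\lambda_GX$, and the reverse inclusion is automatic, so $\lambda_GX=\mathrm{coz}(g_\beta)=\lambda_{\langle g\rangle}X=\lambda_{\overline{\langle g\rangle}}X$. Then Lemma \ref{HOOF} applied to the closed ideals $G$ and $\overline{\langle g\rangle}$ forces $G=\overline{\langle g\rangle}$.

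The main obstacle is the passage, in (1)$\Rightarrow$(3), from a finite family of cozero-sets covering $K_n$ to a \emph{single} generator $g_n\in G$ whose cozero-set (after rescaling) contains $K_n$ with values bounded below by $1$; this is where the ring structure of $G$ enters — one combines $h_1,\ldots,h_m\in G$ covering $K_n$ into $|h_1|^2+\cdots+|h_m|^2\in G$ (using $\overline h\,h\in G$), whose $\beta$-extension is strictly positive on $K_n$, then scales by a constant. A secondary point requiring care is verifying that the infinite series defining $g$ converges in $C_B(X)$ (clear, by the $2^{-n}$ weights) and genuinely lies in the \emph{closed} ideal $G$, and that $g_\beta(y)\neq 0$ whenever $y\in K_n$ for some $n$ — which follows because the $K_n$ are increasing, so $g_\beta(y)\geq 2^{-n}|(g_n)_\beta(y)|/(1+\|g_n\|)>0$ once $y\in K_n$, there being no cancellation issues after one notes we may take each $g_n$ with nonnegative real $\beta$-extension by replacing $g_n$ with $\overline{g_n}g_n$.
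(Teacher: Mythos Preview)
Your argument is correct, and it differs from the paper's in the key implication $(1)\Rightarrow(3)$. The paper uses the Lindel\"of property directly: from the cover $\{\mathrm{coz}(g_i^\beta)\}$ it extracts a countable subfamily $g_1,g_2,\ldots$, invokes Lemmas \ref{UEED}, \ref{JH} and \ref{HOOF} to get $G=\overline{\sum_i\langle g_i\rangle}$, builds $g=\sum_i |g_i|^2/(2^i\|\,|g_i|^2\|)$, and then appeals to the approximation Lemma \ref{JJHF} to show each $g_i\in\overline{\langle g\rangle}$ individually. You instead pass through $\sigma$-compactness, exhaust $\lambda_GX$ by an increasing sequence of compacta $K_n$, and use compactness of each $K_n$ to manufacture a single nonnegative $g_n\in G$ with $(g_n)_\beta\geq 1$ on $K_n$; the nonnegativity then lets you show directly that $\mathrm{coz}(g_\beta)=\lambda_GX$ for $g=\sum_n 2^{-n}g_n/(1+\|g_n\|)$, so a single application of Lemma \ref{HOOF} finishes. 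Your route is more self-contained in that it avoids Lemma \ref{JJHF} altogether, at the price of the extra compactness/positivity bookkeeping; the paper's route is shorter once \ref{JJHF} is in hand. One small point: you should make sure the $g_n$ are taken nonnegative \emph{before} normalizing (i.e.\ set $g_n=\sum_j|h_j|^2$ first, then scale), since your later no-cancellation step depends on $(g_m)_\beta\geq 0$ for every $m$, not just for the index $n$ with $y\in K_n$.
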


\begin{proof}
Note that (1) and (2) are equivalent, as $\lambda_GX$, by its definition, is open in (the compact space) $\beta X$, and is therefore locally compact.

(1) \textit{implies} (3). By definition of $\lambda_GX$ the collection $\{\mathrm{coz}(g^\beta):g\in G\}$ forms an open cover for $\lambda_GX$. Therefore
\begin{equation}\label{KHD}
\lambda_GX=\bigcup_{i=1}^\infty\mathrm{coz}(g_i^\beta)
\end{equation}
for elements $g_1,g_2,\ldots$ in $G$. Note that $\mathrm{coz}(g_i^\beta)=\lambda_{\langle g_i\rangle}X$ for every positive integer $i$. Thus, using Lemmas \ref{UEED} and \ref{JH}, from (\ref{KHD}) we have
\[\lambda_GX=\bigcup_{i=1}^\infty\lambda_{\langle g_i\rangle}X=\lambda_{\sum_{i=1}^\infty\langle g_i\rangle}X=\lambda_{\overline{\sum_{i=1}^\infty\langle g_i\rangle}}X.\]
Lemma \ref{HOOF} now implies that
\begin{equation}\label{JGD}
G=\overline{\sum_{i=1}^\infty\langle g_i\rangle}.
\end{equation}
Without any loss of generality we may assume that $g_i$ is non-zero for every positive integer $i$. Let
\[g=\sum_{i=1}^\infty\frac{h_i}{2^i\|h_i\|},\]
where $h_i=|g_i|^2=g_i\overline{g_i}$ (which is an element of $G$) for every positive integer $i$. Then $g$ is a well defined mapping which is continuous by the Weierstrass $M$-test. Thus, in particular, $g$ is in $G$, as is the limit of a sequence in $G$ and $G$ is closed in $C_B(X)$. We show that $G=\overline{\langle g\rangle}$. It is clear that $\overline{\langle g\rangle}\subseteq G$. To show the reverse inclusion, by (\ref{JGD}), it suffices to show that $g_i$ is in $\overline{\langle g\rangle}$ for all positive integers $i$. For this purpose, fix some positive integer $i$. By the definition of $g$ we have
\[|g_i|^{-1}\big([1/n,\infty)\big)\subseteq |u_n|^{-1}\big([1,\infty)\big)\]
for all positive integers $n$ where $u_n=n^22^i\|h_i\|g$ (which is an element of $\langle g\rangle$). By Lemma \ref{JJHF} then $u_nf_n\rightarrow g_i$ for some sequence $f_1,f_2,\ldots$ in $C_B(X)$. Therefore, $g_i$ is the limit of a sequence in $\langle g\rangle$ and is therefore in $\overline{\langle g\rangle}$. This shows that $G\subseteq\overline{\langle g\rangle}$.

(3) \textit{implies} (2). Note that
\[\lambda_GX=\lambda_{\overline{\langle g\rangle}}X=\lambda_{\langle g\rangle}X\]
by Lemma \ref{UEED}, and $\lambda_{\langle g\rangle}X=\mathrm{coz}(g^\beta)$. But the latter is $\sigma$-compact, as
\[\mathrm{coz}(g^\beta)=\bigcup_{n=1}^\infty |g^\beta|^{-1}\big([1/n,\infty)\big).\]
\end{proof}

The following lemma is quoted from \cite{KK}.

\begin{lemma}\label{HJGF}
Let $X$ be a completely regular space. Let $H$ be a non-vanishing closed ideal in $C_B(X)$. Then the open subspaces of $\mathfrak{sp}(H)$ are exactly those of the form $\lambda_GX$ where $G$ is a closed subideal of $H$. Specifically, for an open subspace $U$ of $\mathfrak{sp}(H)$ we have
\[U=\lambda_GX,\]
where
\[G=\{g\in H:g_\beta|_{\beta X\setminus U}=0\}.\]
\end{lemma}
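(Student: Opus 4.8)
The plan is to verify the two assertions separately: that every set of the form $\lambda_GX$ with $G$ a closed subideal of $H$ is an open subspace of $\mathfrak{sp}(H)$, and conversely that every open subspace $U$ of $\mathfrak{sp}(H)$ arises as $\lambda_GX$ for the specific $G$ displayed in the statement.

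For the first assertion, let $G$ be a closed subideal of $H$. By its definition $\lambda_GX=\bigcup_{g\in G}\mathrm{coz}(g_\beta)$ is a union of cozerosets of continuous mappings on $\beta X$, hence is open in $\beta X$; and since $G\subseteq H$, each such cozeroset lies in $\lambda_HX=\mathfrak{sp}(H)$, so $\lambda_GX$ is an open subspace of $\mathfrak{sp}(H)$. This direction needs nothing beyond the definitions and Theorem \ref{TRES}.

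For the converse, fix an open subspace $U$ of $\mathfrak{sp}(H)$ and set $G=\{g\in H:g_\beta|_{\beta X\setminus U}=0\}$. First I would check that $G$ is a closed subideal of $H$: it is a linear subspace by linearity of $f\mapsto f_\beta$; it is an ideal of $C_B(X)$ because for $g\in G$ and $f\in C_B(X)$ we have $fg\in H$ (as $H$ is an ideal) and $(fg)_\beta=f_\beta g_\beta$ vanishes on $\beta X\setminus U$; and it is closed because if $g_n\to g$ in $C_B(X)$ with each $g_n\in G$, then $\|g_n^\beta-g^\beta\|=\|g_n-g\|\to 0$, so $g^\beta$ vanishes on $\beta X\setminus U$, while $g\in H$ since $H$ is closed. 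The inclusion $\lambda_GX\subseteq U$ is then immediate, since $g_\beta$ vanishes off $U$ for every $g\in G$, whence $\mathrm{coz}(g_\beta)\subseteq U$.

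The remaining inclusion $U\subseteq\lambda_GX$ is the crux. Given $y\in U$, I would first note that $U$, being open in $\mathfrak{sp}(H)$ and $\mathfrak{sp}(H)$ being open in $\beta X$ by Theorem \ref{TRES}, is open in $\beta X$; since $\beta X$ is compact Hausdorff, hence completely regular, there is a continuous $\phi:\beta X\rightarrow[0,1]$ with $\phi(y)=1$ and $\phi\equiv 0$ on $\beta X\setminus U$. Restricting, $\psi=\phi|_X$ lies in $C_B(X)$ with $\psi_\beta=\phi$. Because $y\in\mathfrak{sp}(H)=\lambda_HX$, there is $h\in H$ with $h_\beta(y)\neq 0$. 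Then $g=\psi h$ is in $H$, and $g_\beta=\phi h_\beta$ vanishes on $\beta X\setminus U$, so $g\in G$; moreover $g_\beta(y)=\phi(y)h_\beta(y)=h_\beta(y)\neq 0$, so $y\in\mathrm{coz}(g_\beta)\subseteq\lambda_GX$. The only genuine obstacle is recognizing this multiplication-by-a-cutoff device — producing an element of $G$ that is still nonzero at a prescribed point of $U$ by damping a suitable $h\in H$ with a Urysohn-type function on $\beta X$ — after which everything reduces to routine verifications.
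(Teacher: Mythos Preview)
Your proof is correct and complete. The paper does not actually supply a proof of this lemma---it is quoted from \cite{KK}---so there is no in-paper argument to compare against directly. That said, your cutoff-function device (choose $\phi:\beta X\to[0,1]$ with $\phi(y)=1$ and $\phi|_{\beta X\setminus U}=0$, then multiply by an $h\in H$ with $h_\beta(y)\neq 0$) is precisely the maneuver the paper itself deploys in closely related places, for instance in the proof of Theorem~\ref{JHJ} and in the parenthetical argument inside the proof of Lemma~\ref{GFD}; this is the standard route and almost certainly what \cite{KK} does as well.
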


In \cite{K4} it is proved that the spectrum $\mathfrak{sp}(H)$ of a non-vanishing closed ideal $H$ of $C_B(X)$ is $\sigma$-compact if and only if $H$ is $\sigma$-generated. (As usual, $X$ is a completely regular space.) The next theorem, in part, is an extension of this result.

\begin{theorem}\label{HF}
Let $X$ be a completely regular space. Let $H$ be a non-vanishing closed ideal in $C_B(X)$. The following are equivalent:
\begin{itemize}
\item[(1)] $\mathfrak{sp}(H)$ is Lindel\"{o}f.
\item[(2)] $\mathfrak{sp}(H)$ is $\sigma$-compact.
\item[(3)] $H=\overline{\langle h\rangle}$ for some $h$ in $H$.
\item[(4)] For any collection $\{H_i:i\in I\}$ of closed ideals of $C_B(X)$, if $H=\overline{\sum_{i\in I} H_i}$ then $H=\overline{\sum_{j=1}^\infty H_{i_j}}$ for some $i_1,i_2,\ldots$ in $I$.
\end{itemize}
Here the bar denotes the closure in $C_B(X)$.
\end{theorem}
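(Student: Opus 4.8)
The plan is to establish the cycle $(1)\Leftrightarrow(2)\Leftrightarrow(3)$ first, and then close the loop with $(3)\Leftrightarrow(4)$. Since $\mathfrak{sp}(H)=\lambda_HX$ by the remark following Definition \ref{GHG}, and $H$ is itself a closed (sub)ideal of $C_B(X)$, the equivalence of $(1)$, $(2)$ and $(3)$ is nothing but Lemma \ref{JHFF} applied to the closed ideal $G=H$. So the only real work is in handling condition $(4)$, which does not appear in the preliminary lemmas.

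For $(4)\Rightarrow(3)$: apply $(4)$ to the particular collection $\{H_h:h\in H\}$ where $H_h=\overline{\langle h\rangle}$. Each $H_h$ is a closed ideal of $C_B(X)$ contained in $H$, and $\sum_{h\in H}\langle h\rangle=H$ (since $h\in\langle h\rangle$ for each $h$), hence $H=\overline{\sum_{h\in H}H_h}$. By $(4)$ we get $H=\overline{\sum_{j=1}^\infty H_{h_j}}=\overline{\sum_{j=1}^\infty\langle h_j\rangle}$ for countably many $h_1,h_2,\ldots$ in $H$. Now I run exactly the construction in the proof of Lemma \ref{JHFF}: discard the zero terms, set $g=\sum_{j=1}^\infty h_j'/(2^j\|h_j'\|)$ with $h_j'=|h_j|^2$, which lies in the closed ideal $H$ by the Weierstrass $M$-test, and then use Lemma \ref{JJHF} (with $u_n=n^22^j\|h_j'\|g$) to show each $h_j\in\overline{\langle g\rangle}$, whence $H=\overline{\langle g\rangle}$. (Alternatively, one could simply invoke Lemmas \ref{UEED}, \ref{JH}, \ref{HOOF} to reduce the countable-sum presentation to the $\sigma$-compactness of $\lambda_HX$ and then cite the already-proved $(2)\Rightarrow(3)$ of Lemma \ref{JHFF}; I would probably phrase it this way to avoid repeating the $M$-test argument.)

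For $(3)\Rightarrow(4)$: suppose $H=\overline{\langle h\rangle}$ and let $\{H_i:i\in I\}$ be closed ideals with $H=\overline{\sum_{i\in I}H_i}$. By Lemmas \ref{UEED} and \ref{JH},
\[\lambda_HX=\lambda_{\overline{\sum_{i\in I}H_i}}X=\lambda_{\sum_{i\in I}H_i}X=\bigcup_{i\in I}\lambda_{H_i}X.\]
Since $(3)$ holds, $(2)$ holds too, so $\lambda_HX=\mathfrak{sp}(H)$ is $\sigma$-compact, hence Lindel\"{o}f; thus the open cover $\{\lambda_{H_i}X:i\in I\}$ admits a countable subcover, say $\lambda_HX=\bigcup_{j=1}^\infty\lambda_{H_{i_j}}X$. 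Reversing the computation via Lemmas \ref{JH} and \ref{UEED} gives $\lambda_HX=\lambda_{\overline{\sum_{j=1}^\infty H_{i_j}}}X$, and since $\overline{\sum_{j=1}^\infty H_{i_j}}$ is a closed ideal (a subideal of $H$), Lemma \ref{HOOF} yields $H=\overline{\sum_{j=1}^\infty H_{i_j}}$, which is $(4)$.

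I do not anticipate a genuine obstacle here: every ingredient — the coz-to-$\lambda$ dictionary, preservation of $\lambda$ under closure and sums, injectivity of $G\mapsto\lambda_GX$ on closed ideals, and local compactness of $\lambda_GX$ forcing Lindel\"{o}f $=$ $\sigma$-compact — is already in hand from Lemma \ref{JHFF} and Lemmas \ref{UEED}, \ref{HOOF}, \ref{JH}. The mildest point of care is checking that the countably generated closed ideals $\overline{\sum_{j=1}^\infty H_{i_j}}$ arising in $(4)$ really are honest closed ideals of $C_B(X)$ and that $\langle h\rangle$ is a subideal of $H$ when $h\in H$, so that Lemma \ref{HOOF} applies; both are immediate. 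If anything is "hard", it is purely bookkeeping: making sure the forward and backward passages through $\lambda$ are applied to the correctly-closed ideal at each step so that Lemma \ref{HOOF} is legitimately invoked.
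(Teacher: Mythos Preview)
Your argument is correct. The treatment of $(1)\Leftrightarrow(2)\Leftrightarrow(3)$ and of $(3)\Rightarrow(4)$ matches the paper (the paper phrases the latter as $(1)\Rightarrow(4)$, but the content is identical). The one genuine difference is in closing the loop from $(4)$: the paper proves $(4)\Rightarrow(1)$ by invoking Lemma~\ref{HJGF} to identify an arbitrary open cover of $\mathfrak{sp}(H)$ with a family $\{\lambda_{G_i}X\}$ of closed subideals, then applying $(4)$ to extract a countable subcover; you instead prove $(4)\Rightarrow(3)$ by feeding the specific family $\{\overline{\langle h\rangle}:h\in H\}$ into $(4)$ and re-running the single-generator construction from Lemma~\ref{JHFF}. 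Your route has the mild advantage of not needing Lemma~\ref{HJGF} at all, while the paper's route is a shade more conceptual in that it exhibits $(4)$ directly as the ideal-theoretic translation of the Lindel\"{o}f property. Either way the bookkeeping you flag (that $\overline{\sum_j H_{i_j}}$ is a closed ideal so Lemma~\ref{HOOF} applies) is indeed the only point requiring care, and you have it right.
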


\begin{proof}
The equivalence of (1), (2) and (3) follows from Lemma \ref{JHFF}.

(1) \textit{implies} (4). Let $\{H_i:i\in I\}$ be a collection of ideals of $C_B(X)$ such that \[H=\overline{\sum_{i\in I} H_i}.\]
Then, by Lemmas \ref{UEED} and \ref{JH} we have
\[\lambda_HX=\lambda_{\overline{\sum_{i\in I} H_i}}X=\lambda_{\sum_{i\in I} H_i}X=\bigcup_{i\in I}\lambda_{H_i}X.\]
Therefore, since $\lambda_HX$ ($=\mathfrak{sp}(H)$) is Lindel\"{o}f, we have
\begin{equation}\label{JHBG}
\lambda_HX=\bigcup_{j=1}^\infty\lambda_{H_{i_j}}X
\end{equation}
for some $i_1,i_2,\ldots$ in $I$. But, again by Lemmas \ref{UEED} and \ref{JH} we have
\[\bigcup_{j=1}^\infty\lambda_{H_{i_j}}X=\lambda_{\sum_{j=1}^\infty H_{i_j}}X=\lambda_{\overline{\sum_{j=1}^\infty H_{i_j}}}X,\]
which, together with (\ref{JHBG}), by Lemma \ref{HOOF} we have
\[H=\overline{\sum_{j=1}^\infty H_{i_j}}.\]

(4) \textit{implies} (1). Note that by Lemma \ref{HJGF} the open subspaces of $\mathfrak{sp}(H)$ are of the form $\lambda_GX$ where $G$ is a closed subideal of $H$. An argument similar to the one we used to prove that (1) implies (4) now completes the proof.
\end{proof}

Our next theorem deals with countable compactness. Recall that a Hausdorff space $X$ is \textit{countably compact} if every countable open cover of $X$ has a finite subcover, or, equivalently, if for every sequence $U_1\subseteq U_2\subseteq\cdots$ of proper open subspaces of $X$ we have $X\neq\bigcup_{n=1}^\infty U_n$. (See Theorem 3.10.2 of \cite{E}.)

\begin{theorem}\label{JGDSS}
Let $X$ be a completely regular space. Let $H$ be a non-vanishing closed ideal in $C_B(X)$. The following are equivalent:
\begin{itemize}
\item[(1)] $\mathfrak{sp}(H)$ is countably compact.
\item[(2)] For every sequence $H_1,H_2,\ldots$ of closed subideals of $H$, if $H=\overline{\sum_{n=1}^\infty H_n}$ then $H=\overline{\sum_{n=1}^m H_n}$ for some positive integer $m$.
\item[(3)] For every sequence $H_1\subseteq H_2\subseteq\cdots$ of proper closed subideals of $H$ we have \[H\neq\overline{\bigcup_{n=1}^\infty H_n}.\]
\end{itemize}
Here the bar denotes the closure in $C_B(X)$.
\end{theorem}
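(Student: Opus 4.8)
The plan is to run everything through the dictionary provided by Lemma~\ref{HJGF}, which identifies the open subspaces of $\mathfrak{sp}(H)=\lambda_HX$ with the sets $\lambda_GX$ for $G$ a closed subideal of $H$, together with Lemmas~\ref{UEED}, \ref{JH} and~\ref{HOOF}, which allow one to pass back and forth between the algebraic statement $H=\overline{\sum_n H_n}$ (for closed subideals $H_n$) and the topological statement $\lambda_HX=\bigcup_n\lambda_{H_n}X$. Countable compactness of $\mathfrak{sp}(H)$ is then read off from the two characterizations recalled just before the theorem: every countable open cover has a finite subcover, and no increasing sequence of proper open subspaces covers the space.

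First I would prove $(1)\Leftrightarrow(2)$. For $(1)\Rightarrow(2)$, given closed subideals $H_1,H_2,\ldots$ of $H$ with $H=\overline{\sum_{n=1}^\infty H_n}$, Lemmas~\ref{UEED} and~\ref{JH} give $\mathfrak{sp}(H)=\lambda_HX=\bigcup_{n=1}^\infty\lambda_{H_n}X$, a countable open cover (openness is part of Lemma~\ref{HJGF}, or immediate from the definition of $\lambda_{H_n}X$). Countable compactness yields a finite subcover, and since enlarging a cover keeps it a cover we may take it to be an initial segment: $\lambda_HX=\bigcup_{n=1}^m\lambda_{H_n}X=\lambda_{\sum_{n=1}^m H_n}X=\lambda_{\overline{\sum_{n=1}^m H_n}}X$, so Lemma~\ref{HOOF} gives $H=\overline{\sum_{n=1}^m H_n}$. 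For $(2)\Rightarrow(1)$, I would take a countable open cover $\{U_n\}$ of $\mathfrak{sp}(H)$, write $U_n=\lambda_{H_n}X$ with $H_n$ a closed subideal of $H$ by Lemma~\ref{HJGF}, deduce $H=\overline{\sum_n H_n}$ by the same computation read in reverse, apply~(2) to obtain $H=\overline{\sum_{n=1}^m H_n}$, and then run the computation forward again to conclude $\mathfrak{sp}(H)=\bigcup_{n=1}^m U_n$.

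The equivalence $(2)\Leftrightarrow(3)$ is purely algebraic and uses only that for a nested family of ideals the algebraic sum of finitely many of them coincides with the largest, and the sum of all of them coincides with their union. Thus in $(2)\Rightarrow(3)$, a would-be counterexample, an increasing sequence $G_1\subseteq G_2\subseteq\cdots$ of proper closed subideals with $H=\overline{\bigcup_m G_m}=\overline{\sum_m G_m}$, is killed by applying~(2), which forces $H=\overline{\sum_{m=1}^k G_m}=\overline{G_k}=G_k$, contradicting the properness of $G_k$. Conversely, in $(3)\Rightarrow(2)$, starting from closed subideals $H_n$ with $H=\overline{\sum_{n=1}^\infty H_n}$ but with no partial sum dense in $H$, I would set $G_m=\overline{\sum_{n=1}^m H_n}$; these are proper closed subideals of $H$ (each $G_m\subseteq H$ since $\sum_{n=1}^m H_n\subseteq H$ and $H$ is closed), they increase, and $\overline{\bigcup_m G_m}=\overline{\sum_{n=1}^\infty H_n}=H$, contradicting~(3).

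I do not expect a serious obstacle; the theorem is essentially a transcription of the open-cover definition of countable compactness through Lemma~\ref{HJGF}. The points that need care are all bookkeeping: replacing an arbitrary finite subcover of $\{\lambda_{H_n}X\}$ by an initial segment, checking that the closures of partial sums $\overline{\sum_{n=1}^m H_n}$ really are closed ideals of $C_B(X)$ lying inside $H$ (so that they are legitimate closed subideals of $H$, and proper whenever the equality $H=\overline{\sum_{n=1}^m H_n}$ fails), and noting that $G\neq H$ for closed subideals implies $\lambda_GX\neq\lambda_HX$ via Lemma~\ref{HOOF}, so that ``proper subideal'' and ``proper open subspace'' correspond.
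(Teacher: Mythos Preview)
Your proposal is correct and follows exactly the approach the paper intends: the paper's own proof simply says that one argues as in Theorem~\ref{HF}, using the dictionary of Lemma~\ref{HJGF} together with Lemmas~\ref{UEED}, \ref{JH} and~\ref{HOOF}, and singles out the one extra observation needed here, namely that $\lambda_{H_1}X\subseteq\lambda_{H_2}X$ forces $H_1\subseteq H_2$ for closed subideals (so ``proper subideal'' and ``proper open subspace'' match up), which you also note. Your write-up is just a more detailed version of what the paper sketches.
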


\begin{proof}
The proof uses the same methods as in the proof of Theorem \ref{HF}. Observe that for closed subideals $H_1$ and $H_2$ of $H$ if $\lambda_{H_1}X\subseteq\lambda_{H_2}X$ then $H_1\subseteq H_2$, as
\[\lambda_{H_2}X=\lambda_{H_1}X\cup\lambda_{H_2}X=\lambda_{H_1+H_2}X=\lambda_{\overline{H_1+H_2}}X\]
by Lemmas \ref{UEED} and \ref{JH}, which implies that $H_2=\overline{H_1+H_2}$ by Lemma \ref{HOOF}.
\end{proof}

Our next theorem deals with pseudocompactness. Recall that a completely regular space is \textit{pseudocompact} if every continuous scalar valued mapping on $X$ is bounded, or, equivalently, if there is no infinite discrete collection of open subspaces in $X$. (A collection $\mathscr{A}$ of subsets of a space $X$ is called \textit{discrete} if every element of $X$ has a neighborhood which intersects at most one element from $\mathscr{A}$.) Completely regular countably compact spaces are pseudocompact and normal pseudocompact spaces are countably compact. (See Theorems 3.10.20 and 3.10.21 of \cite{E}.)

The following two lemmas are quoted from \cite{KK}.

\begin{lemma}\label{HGFD}
Let $X$ be a completely regular space. Let $H$ be a non-vanishing closed ideal in $C_B(X)$. For a closed subideal $M$ of $H$ the following are equivalent:
\begin{itemize}
\item[(1)] $M$ is a maximal closed subideal of $H$.
\item[(2)] There is some $x$ in $\lambda_HX$ such that
\[\lambda_MX=\lambda_HX\setminus\{x\}.\]
\end{itemize}
\end{lemma}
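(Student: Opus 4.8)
The plan is to prove the equivalence by translating the ``maximal closed subideal'' condition through the correspondence of Lemma \ref{HJGF}, which identifies closed subideals of $H$ with open subspaces of $\mathfrak{sp}(H)=\lambda_HX$. The first step is to record that this correspondence is \emph{inclusion-preserving} and \emph{injective}: if $G_1,G_2$ are closed subideals of $H$ with $\lambda_{G_1}X\subseteq\lambda_{G_2}X$ then $G_1\subseteq G_2$ (this is exactly the observation made in the proof of Theorem \ref{JGDSS}, obtained by writing $\lambda_{G_2}X=\lambda_{G_1}X\cup\lambda_{G_2}X=\lambda_{\overline{G_1+G_2}}X$ and applying Lemma \ref{HOOF}); and conversely $G_1\subseteq G_2$ trivially gives $\lambda_{G_1}X\subseteq\lambda_{G_2}X$ from Definition \ref{GHG}. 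Consequently $M\mapsto\lambda_MX$ is an order-isomorphism between the poset of closed subideals of $H$ and the poset of open subspaces of $\mathfrak{sp}(H)$, with $H$ itself corresponding to the whole space $\mathfrak{sp}(H)=\lambda_HX$.

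Granting this, $M$ is a maximal closed subideal of $H$ if and only if $\lambda_MX$ is a maximal proper open subspace of $\lambda_HX$, i.e. $\lambda_MX$ is a proper open subset and the only open set strictly between $\lambda_MX$ and $\lambda_HX$ is $\lambda_HX$ itself. So the proof reduces to the purely topological assertion: in a locally compact Hausdorff space $Y$, a proper open subset $U$ is a maximal proper open subset precisely when $Y\setminus U$ is a single point. For the easy direction, if $Y\setminus U=\{x\}$ then any open $V$ with $U\subsetneq V\subseteq Y$ must contain a point outside $U$, hence contains $x$, hence $V=Y$; and $U=Y\setminus\{x\}$ is open since $\{x\}$ is closed in a Hausdorff space, and proper since $Y$ is nonempty (it contains the dense copy of $X$; if $X=\emptyset$ then $H=0$ and the statement is vacuous). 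For the converse, suppose $U$ is a maximal proper open subset and $Y\setminus U$ contains two distinct points $x,y$. Using that $Y$ is Hausdorff (indeed locally compact Hausdorff), choose an open neighbourhood $W$ of $x$ with $y\notin\overline{W}$; then $U\cup W$ is open, strictly larger than $U$ (it contains $x\notin U$), and still proper (it misses $y$), contradicting maximality. Hence $Y\setminus U$ is a single point, so $\lambda_MX=\lambda_HX\setminus\{x\}$ for some $x\in\lambda_HX$.

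Assembling the pieces: (1) $\Rightarrow$ (2) follows by taking $M$ maximal, noting $\lambda_MX$ is a maximal proper open subspace of $\lambda_HX$ by the order-isomorphism, then applying the topological fact to get $\lambda_MX=\lambda_HX\setminus\{x\}$; (2) $\Rightarrow$ (1) runs the same chain backwards, using that $\lambda_HX\setminus\{x\}$ is a maximal proper open subspace and pulling maximality back across the order-isomorphism. The main technical point to handle carefully is the injectivity/order-reflecting property of $M\mapsto\lambda_MX$ on \emph{closed} subideals — without closedness Lemma \ref{HOOF} does not apply and the correspondence could collapse distinct ideals — but since we are only ever comparing closed subideals of $H$, this is exactly the situation covered by Lemma \ref{HOOF} (applied with $X$ replaced, implicitly, by the ambient comparison $\overline{G_1+G_2}$ vs.\ $G_2$). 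I expect no genuine obstacle here; the only thing to be careful about is the degenerate case and making sure the Hausdorff separation used to enlarge $U$ past a two-point complement is phrased for a general locally compact Hausdorff $Y$ rather than invoking metrizability or similar.
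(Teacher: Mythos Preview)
Your argument is correct. The order-isomorphism between closed subideals of $H$ and open subspaces of $\mathfrak{sp}(H)$ (via Lemma \ref{HJGF} together with the order-reflecting step from Lemmas \ref{UEED}, \ref{JH} and \ref{HOOF}) reduces the question exactly as you describe, and the topological characterization of maximal proper open subsets in a Hausdorff space as complements of singletons is straightforward. One small remark: in the step where $Y\setminus U$ contains two points $x,y$, you only need $y\notin W$, not $y\notin\overline{W}$; indeed $T_1$ suffices here, and you could simply take $W=Y\setminus\{y\}$.

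As for comparison with the paper: the paper does \emph{not} prove this lemma. It is introduced with ``The following two lemmas are quoted from \cite{KK}'' and no proof is supplied. So there is nothing in the present paper to compare your approach against. Your proof is the natural one given the machinery already assembled (Lemmas \ref{UEED}, \ref{HOOF}, \ref{JH}, \ref{HJGF}), and it is self-contained within the paper's framework.
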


\begin{lemma}\label{HFDS}
Let $X$ be a completely regular space. Let $G_1$ and $G_2$ be ideals of $C_B(X)$. Then
\[\lambda_{G_1\cap G_2}X=\lambda_{G_1}X\cap\lambda_{G_2}X.\]
\end{lemma}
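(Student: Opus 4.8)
The plan is to establish the two inclusions in $\lambda_{G_1\cap G_2}X=\lambda_{G_1}X\cap\lambda_{G_2}X$ separately. The inclusion $\lambda_{G_1\cap G_2}X\subseteq\lambda_{G_1}X\cap\lambda_{G_2}X$ is immediate from the definition: if $g\in G_1\cap G_2$ then $g$ lies in each of $G_1$ and $G_2$, whence $\mathrm{coz}(g_\beta)\subseteq\lambda_{G_1}X$ and $\mathrm{coz}(g_\beta)\subseteq\lambda_{G_2}X$; taking the union over all $g\in G_1\cap G_2$ gives $\lambda_{G_1\cap G_2}X\subseteq\lambda_{G_1}X\cap\lambda_{G_2}X$.

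For the reverse inclusion I would argue pointwise, using the ideal property together with the multiplicativity $(fg)_\beta=f_\beta g_\beta$ of the Stone--\v{C}ech extension recorded in the preliminaries. Let $p\in\lambda_{G_1}X\cap\lambda_{G_2}X$. By the definition of $\lambda_{G_i}X$ there exist $g_1\in G_1$ with $g_{1,\beta}(p)\neq 0$ and $g_2\in G_2$ with $g_{2,\beta}(p)\neq 0$. Since $G_1$ is an ideal of $C_B(X)$ and $g_1\in G_1$, the product $g_1g_2$ belongs to $G_1$; since $G_2$ is an ideal and $g_2\in G_2$ (and $C_B(X)$ is commutative), the same product belongs to $G_2$; hence $g_1g_2\in G_1\cap G_2$. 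Now $(g_1g_2)_\beta(p)=g_{1,\beta}(p)g_{2,\beta}(p)\neq 0$, so $p\in\mathrm{coz}\big((g_1g_2)_\beta\big)\subseteq\lambda_{G_1\cap G_2}X$. As $p$ was arbitrary, $\lambda_{G_1}X\cap\lambda_{G_2}X\subseteq\lambda_{G_1\cap G_2}X$, completing the proof.

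There is no genuine obstacle here; the only thing that requires a moment's attention is the choice of the witnessing element of $G_1\cap G_2$. One must take the \emph{product} $g_1g_2$ (which is forced into the intersection by the ideal property), not a sum or linear combination of $g_1$ and $g_2$, since such combinations need neither lie in $G_1\cap G_2$ nor have non-vanishing $\beta X$-extension at $p$. If one prefers a nonnegative witness one may instead use $|g_1|^2|g_2|^2=g_1\overline{g_1}\,g_2\overline{g_2}$, whose $\beta X$-extension is $|g_{1,\beta}|^2|g_{2,\beta}|^2$ and which lies in $G_1\cap G_2$ for the same reason, but this refinement is not needed for the statement.
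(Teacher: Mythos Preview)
Your argument is correct. Both inclusions are handled properly: the forward inclusion is immediate from $G_1\cap G_2\subseteq G_i$, and for the reverse inclusion the key observation---that the product $g_1g_2$ lies in $G_1\cap G_2$ because each $G_i$ is an ideal in the commutative algebra $C_B(X)$, and that $(g_1g_2)_\beta=g_{1,\beta}g_{2,\beta}$ is nonzero at $p$---is exactly the right one.

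As for comparison with the paper: the paper does not actually supply a proof of this lemma. It is stated there as quoted from the companion article \cite{KK}, so there is no in-text argument to compare against. Your proof is the natural and standard one, and it is almost certainly what appears in \cite{KK} as well; there is really no other reasonable route to this identity once one has the multiplicativity $(fg)_\beta=f_\beta g_\beta$ of the Stone--\v{C}ech extension.
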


\begin{theorem}\label{JHJ}
Let $X$ be a completely regular space. Let $H$ be a non-vanishing closed ideal in $C_B(X)$. The following are equivalent:
\begin{itemize}
\item[(1)] $\mathfrak{sp}(H)$ is pseudocompact.
\item[(2)] $H$ has no subideal of the form
\[\bigoplus_{i\in I} H_i\]
where $\{H_i:i\in I\}$ is an infinite (faithfully indexed) collection of closed subideals of $H$ such that for every maximal closed subideal $M$ of $H$ there is an $f$ in $H\setminus M$ such that $\langle f\rangle\cap H_i=0$ except for at most one $i$ in $I$.
\end{itemize}
\end{theorem}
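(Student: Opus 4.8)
The plan is to translate ``$\mathfrak{sp}(H)$ is pseudocompact'' --- which, by the definition recalled above, means that $\mathfrak{sp}(H)$ carries no infinite discrete collection of (nonempty) open subspaces --- into the algebra of $H$ by means of the dictionary supplied by the preceding lemmas. Write $Y=\mathfrak{sp}(H)=\lambda_HX$. The correspondences to be used are: the open subspaces of $Y$ are exactly the sets $\lambda_GX$ with $G$ a closed subideal of $H$, and $\lambda_{G_1}X\subseteq\lambda_{G_2}X$ forces $G_1\subseteq G_2$ for closed subideals (Lemma \ref{HJGF} and the computation in the proof of Theorem \ref{JGDSS}); the points $x\in Y$ correspond to the maximal closed subideals $M$ of $H$ via $\lambda_MX=Y\setminus\{x\}$ (Lemma \ref{HGFD}); intersections go to intersections, $\lambda_{G_1\cap G_2}X=\lambda_{G_1}X\cap\lambda_{G_2}X$ (Lemma \ref{HFDS}), and sums go to unions (Lemma \ref{JH}). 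I will also use repeatedly that an ideal $J$ of $C_B(X)$ with $\lambda_JX=\emptyset$ is zero, since its closure is then a closed ideal with empty $\lambda$, hence is zero by Lemmas \ref{UEED} and \ref{HOOF}.

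First I would record two observations. (a) For a family $\{H_i:i\in I\}$ of closed subideals of $H$, the sum $\sum_{i\in I}H_i$ is direct --- equivalently $\bigoplus_{i\in I}H_i$ is a subideal of $H$ --- if and only if the sets $\lambda_{H_i}X$ are pairwise disjoint; indeed directness means $H_i\cap\sum_{j\neq i}H_j=0$ for each $i$, and by Lemmas \ref{HFDS} and \ref{JH} together with the ``empty $\lambda$'' remark this is equivalent to $\lambda_{H_i}X\cap\bigcup_{j\neq i}\lambda_{H_j}X=\emptyset$ for each $i$. (b) The sets $\mathrm{coz}(f_\beta)$ with $f$ ranging over $H$ form a base of open neighbourhoods at each point of $Y$: if $x\in U$ with $U$ open in $Y$ (hence, as $\lambda_HX$ is open in $\beta X$, open in $\beta X$), then by normality of $\beta X$ choose $\varphi\in C_B(X)$ with $\varphi_\beta(x)=1$ and $\varphi_\beta=0$ off $U$, and choose $h\in H$ with $h_\beta(x)\neq0$; then $h\varphi\in H$ and $x\in\mathrm{coz}((h\varphi)_\beta)\subseteq U$. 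Finally, note that for $f\in H$ the condition $f\in H\setminus M$, where $M\leftrightarrow x$, says precisely $f_\beta(x)\neq0$, i.e.\ $x\in\mathrm{coz}(f_\beta)$, while $\langle f\rangle\cap H_i=0$ says precisely $\mathrm{coz}(f_\beta)\cap\lambda_{H_i}X=\emptyset$ (using $\lambda_{\langle f\rangle}X=\mathrm{coz}(f_\beta)$, Lemma \ref{HFDS}, and the ``empty $\lambda$'' remark).

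With these in place the equivalence is a direct unwinding. Assume $Y$ is not pseudocompact and fix an infinite faithfully indexed discrete collection $\{U_i:i\in I\}$ of nonempty open subsets of $Y$; such a collection is automatically pairwise disjoint. Let $H_i$ be the closed subideal of $H$ with $\lambda_{H_i}X=U_i$ given by Lemma \ref{HJGF}; the $H_i$ are nonzero and pairwise distinct, and by (a) the sum $\bigoplus_{i\in I}H_i$ is a subideal of $H$. Given a maximal closed subideal $M$, corresponding to $x\in Y$, discreteness supplies an open neighbourhood of $x$ meeting at most one $U_i$, and by (b) we may shrink it to one of the form $\mathrm{coz}(f_\beta)$ with $f\in H$; then $f\in H\setminus M$ and $\langle f\rangle\cap H_i=0$ for all but at most one $i$. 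This exhibits exactly the configuration forbidden in (2), so (2) implies (1). Conversely, given a collection $\{H_i:i\in I\}$ as in the negation of (2), discard the (at most one) index with $H_i=0$; the remaining $U_i:=\lambda_{H_i}X$ are nonempty, infinitely many, pairwise disjoint by (a), and for each $x\in Y$, with associated maximal closed subideal $M$, the element $f\in H\setminus M$ furnished by (2) gives an open neighbourhood $\mathrm{coz}(f_\beta)$ of $x$ meeting at most one $U_i$. Hence $\{U_i\}$ is an infinite discrete collection of nonempty open subspaces of $Y$ and $Y$ is not pseudocompact, so (1) implies (2).

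I expect the main obstacle to be observation (b): one must verify that the cozerosets of members of $H$ genuinely form a neighbourhood base of $Y$, so that the clause ``there is an $f\in H\setminus M$ with $\langle f\rangle\cap H_i=0$ except for at most one $i$'' encodes discreteness of $\{\lambda_{H_i}X\}$ and not merely local finiteness; the ideal property of $H$ (to produce $h\varphi\in H$) and the normality of $\beta X$ are what make this work. The remaining steps are bookkeeping with Lemmas \ref{UEED}, \ref{HOOF}, \ref{JH}, \ref{HJGF}, \ref{HGFD} and \ref{HFDS}, and the elementary fact that a discrete family of sets is pairwise disjoint.
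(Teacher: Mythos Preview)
Your proof is correct and follows essentially the same approach as the paper's: both argue by contraposition in each direction, using the correspondence between open subsets of $\mathfrak{sp}(H)$ and closed subideals of $H$ (Lemma \ref{HJGF}), the identification of points with maximal closed subideals (Lemma \ref{HGFD}), and the construction of a neighbourhood of the form $\mathrm{coz}((h\varphi)_\beta)$ with $h\in H$ and $\varphi$ a Urysohn function. Your organization is slightly cleaner in that you isolate observations (a) and (b) and the translations $f\in H\setminus M\Leftrightarrow f_\beta(x)\neq0$ and $\langle f\rangle\cap H_i=0\Leftrightarrow \mathrm{coz}(f_\beta)\cap\lambda_{H_i}X=\emptyset$ up front, whereas the paper weaves these into the two halves of the argument, but the content is the same.
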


\begin{proof}
(1) \textit{implies} (2). Suppose that $H$ contains a subideal of the form $\bigoplus_{i\in I} H_i$ with properties as indicated in (2). For all $i$ in $I$ let $U_i=\lambda_{H_i}X$. We check that $\mathscr{U}=\{U_i:i\in I\}$ is an infinite discrete collection of open subspaces of $\lambda_HX$ ($=\mathfrak{sp}(H)$). This will prove that $\lambda_HX$ is not pseudocompact. It is clear that $\mathscr{U}$ is infinite (as the number of $H_i$'s are infinite, and there is a one to one correspondence between $H_i$'s and $\lambda_{H_i}X$'s by Lemma \ref{HOOF}) and $\mathscr{U}$ consists of open subspaces of $\lambda_HX$. Let $x$ be in $\lambda_HX$. Let $M$ be a closed subideal of $H$ such that $\lambda_MX=\lambda_HX\setminus\{x\}$, which exists by Lemma \ref{HJGF}. By Lemma \ref{HGFD} then $M$ is a maximal closed subideal of $H$. Using our assumption, there is an $f$ in $H\setminus M$ such that $\langle f\rangle\cap H_i=0$ except for at most one $i$ in $I$. Note that $f_\beta(x)\neq0$, as otherwise, using Lemma \ref{UEED}, we have
\[\lambda_{\overline{\langle f\rangle}}X=\lambda_{\langle f\rangle}X=\mathrm{coz}(f_\beta)\subseteq\lambda_MX,\]
which (arguing as in the proof of Theorem \ref{JGDSS}) implies that $\overline{\langle f\rangle}\subseteq M$ (and in particular, $f$ is in $M$), which contradicts the choice of $f$. Let $U=\mathrm{coz}(f_\beta)$. Then $U$ is an open neighborhood of $x$ in $\lambda_HX$ which intersects $U_i$'s for at most one $i$ in $I$, as
\[U\cap U_i=\lambda_{\langle f\rangle}X\cap\lambda_{H_i}X=\lambda_{\langle f\rangle\cap H_i}X\]
by Lemma \ref{HFDS}, and $U\cap U_i\neq\emptyset$ implies that $\langle f\rangle\cap H_i\neq 0$ for all $i$ in $I$.

(2) \textit{implies} (1). Suppose that $\mathfrak{sp}(H)$ ($=\lambda_HX$) is not pseudocompact. Let $\mathscr{U}=\{U_i:i\in I\}$ be an infinite (faithfully indexed) discrete collection of open subspaces in $\lambda_HX$. For every $i$ in $I$ let $H_i$ be a closed subideal of $H$ such that $U_i=\lambda_{H_i}X$, which exists by Lemma \ref{HJGF}. Consider the collection $\{H_i:i\in I\}$ which is faithfully indexed, as $\mathscr{U}$ is (and there is a one to one correspondence between $H_i$'s and $U_i$'s by Lemma \ref{HOOF}). For a fixed $i_0$ in $I$ note that
\begin{eqnarray*}
\lambda_{H_{i_0}\cap\sum_{i_0\neq i\in I}H_i}X&=&\lambda_{H_{i_0}}X\cap\lambda_{\sum_{i_0\neq i\in I}H_i}X\\&=&\lambda_{H_{i_0}}X\cap\bigcup_{i_0\neq i\in I}\lambda_{H_i}X=U_{i_0}\cap\bigcup_{i_0\neq i\in I}U_i=\emptyset
\end{eqnarray*}
by Lemmas \ref{JH} and \ref{HFDS}. This implies that $H_{i_0}\cap\sum_{i_0\neq i\in I}H_i=0$. Therefore
\[\sum_{i\in I}H_i=\bigoplus_{i\in I}H_i,\]
which is a subideal of $H$. Now, let $M$ be a maximal closed subideal of $H$. By Lemma \ref{HGFD} then $\lambda_MX=\lambda_HX\setminus\{x\}$ for some $x$ in $\lambda_HX$. Since $\mathscr{U}$ is discrete, there is an open neighborhood $U$ of $x$ in $\lambda_HX$ which intersects at most one element of $\mathscr{U}$. Let $F:\beta X\rightarrow [0,1]$ be a continuous mapping such that $F(x)=1$ and $F|_{\beta X\setminus U}=0$. (Such an $F$ exists, as $\beta X$ is completely regular.) Let $h$ be an element of $H$ such that $h_\beta(x)\neq 0$. (Such an $h$ exists by the definition of $\lambda_HX$.) Let $f=hF|_X$. Then $f$ is in $H$, but is not in $M$ (as otherwise, $x$ would be in $\lambda_MX$, since $f_\beta(x)=h_\beta(x)F(x)\neq 0$). By Lemma \ref{HFDS} we have
\[\lambda_{\langle f\rangle\cap H_i}X=\lambda_{\langle f\rangle}X\cap\lambda_{H_i}X=\mathrm{coz}(f_\beta)\cap U_i\]
and $\mathrm{coz}(f_\beta)\subseteq\mathrm{coz}(F)\subseteq U$, thus $\lambda_{\langle f\rangle\cap H_i}X$ is contained in $U\cap U_i$ for all $i$ in $I$. Therefore $\lambda_{\langle f\rangle\cap H_i}X$ is empty except for at most one $i$ in $I$. Thus $\langle f\rangle\cap H_i$ is trivial except for at most one $i$ in $I$.
\end{proof}

Our concluding theorem deals with paracompactness. Recall that a Hausdorff space is \textit{paracompact} if every open cover of it has a locally finite open refinement. It is known that every locally compact paracompact space may be represented as the union of a collection of pairwise disjoint open (and thus also closed) Lindel\"{o}f subspaces. (See Theorem 5.1.27 of \cite{E}.) The converse statement also holds, i.e., a space which is the union of a collection of pairwise disjoint open Lindel\"{o}f subspaces is paracompact. (See Theorem 5.1.30 of \cite{E}. Note that Lindel\"{o}f spaces are paracompact by Theorem 5.1.2 of \cite{E}.)

\begin{theorem}\label{JGDS}
Let $X$ be a completely regular space. Let $H$ be a non-vanishing closed ideal in $C_B(X)$. The following are equivalent:
\begin{itemize}
\item[(1)] $\mathfrak{sp}(H)$ is paracompact.
\item[(2)] $H$ is representable as
\[H=\overline{\bigoplus_{i\in I}\langle h_i\rangle}.\]
\end{itemize}
Here the bar denotes the closure in $C_B(X)$.
\end{theorem}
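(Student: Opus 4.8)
The plan is to transport, across the assignment $G\mapsto\lambda_GX$ and the Lemmas \ref{HJGF}, \ref{JHFF}, \ref{HFDS}, \ref{JH}, \ref{UEED} and \ref{HOOF}, the topological fact recalled just above the statement: a locally compact space is paracompact exactly when it is the union of a pairwise disjoint family of open Lindel\"of subspaces. Since $\mathfrak{sp}(H)=\lambda_HX$ is locally compact by Theorem \ref{TRES}, this is the natural bridge, and the argument runs much as in the earlier theorems of this section.

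To prove that (1) implies (2), I would start from a partition $\mathfrak{sp}(H)=\bigcup_{i\in I}Y_i$ into pairwise disjoint open Lindel\"of subspaces, supplied by paracompactness together with local compactness. By Lemma \ref{HJGF} each $Y_i$ is of the form $\lambda_{G_i}X$ for a closed subideal $G_i$ of $H$, and since $\lambda_{G_i}X$ is Lindel\"of, Lemma \ref{JHFF} gives $G_i=\overline{\langle g_i\rangle}$ for some $g_i\in G_i\subseteq H$. Pairwise disjointness, read through Lemmas \ref{JH} and \ref{HFDS}, says $\lambda_{G_{i_0}\cap\sum_{i\neq i_0}G_i}X=Y_{i_0}\cap\bigcup_{i\neq i_0}Y_i=\emptyset$, which forces $G_{i_0}\cap\sum_{i\neq i_0}G_i=0$ (a nonzero ideal has nonempty $\lambda$); thus the $G_i$ form a direct sum. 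That the $Y_i$ cover $\mathfrak{sp}(H)$ gives, via Lemmas \ref{JH}, \ref{UEED} and \ref{HOOF}, that $H=\overline{\sum_{i\in I}G_i}=\overline{\bigoplus_{i\in I}\overline{\langle g_i\rangle}}$. The last move is to drop the inner closures, obtaining $H=\overline{\bigoplus_{i\in I}\langle g_i\rangle}$, which is (2).

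To prove that (2) implies (1), I would write $H=\overline{\bigoplus_{i\in I}\langle h_i\rangle}$ and observe that independence of the summands forces (Lemma \ref{HFDS}) the cozerosets $U_i=\lambda_{\langle h_i\rangle}X=\mathrm{coz}(h_i^\beta)$ to be pairwise disjoint; each $U_i$ is open in $\beta X$ and, being $\bigcup_{n=1}^\infty|h_i^\beta|^{-1}([1/n,\infty))$, is $\sigma$-compact, hence Lindel\"of; and Lemmas \ref{JH} and \ref{UEED} give $\bigcup_{i\in I}U_i=\lambda_{\sum_i\langle h_i\rangle}X=\lambda_HX=\mathfrak{sp}(H)$. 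So $\mathfrak{sp}(H)$ is a union of pairwise disjoint open Lindel\"of subspaces, hence paracompact by the cited topological result.

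Most of the work is routine manipulation of the $\lambda$-construction; the point I expect to demand the most care — and hence the main obstacle — is the interchange in (1)$\Rightarrow$(2) between $\overline{\bigoplus_{i\in I}\overline{\langle g_i\rangle}}$ and $\overline{\bigoplus_{i\in I}\langle g_i\rangle}$: one must check both that the principal ideals $\langle g_i\rangle$ remain independent and that the closure of their sum equals the closure of the sum of the $\overline{\langle g_i\rangle}$. The symmetric subtlety in (2)$\Rightarrow$(1) — that the purely algebraic independence of the $\langle h_i\rangle$ really yields pairwise disjoint cozerosets — is immediate, resting only on $\lambda_{\langle 0\rangle}X=\emptyset$.
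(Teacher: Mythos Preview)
Your proposal is correct and follows essentially the same route as the paper: both directions hinge on the characterization of locally compact paracompact spaces as disjoint unions of open Lindel\"of pieces, translated through the $\lambda$-correspondence and Lemma~\ref{JHFF}. The only cosmetic differences are that the paper verifies the direct-sum condition $H_{i_0}\cap\sum_{i\neq i_0}H_i=0$ by an element-wise vanishing argument (rather than via Lemmas~\ref{JH} and~\ref{HFDS} as you do), and in (2)$\Rightarrow$(1) obtains disjointness of the $\mathrm{coz}(h_i^\beta)$ from the observation $h_ih_j=0$ rather than through Lemma~\ref{HFDS}; the closure interchange you flag is indeed routine and the paper does not pause over it.
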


\begin{proof}
(1) \textit{implies} (2). Since $\lambda_HX$ ($=\mathfrak{sp}(H)$) is locally compact and paracompact, we have
\[\lambda_HX=\bigcup_{i\in I}U_i,\]
where $\{U_i:i\in I\}$ is a collection of pairwise disjoint open Lindel\"{o}f subspaces of $\lambda_HX$. For every $i$ in $I$ let $H_i$ be the closed subideal of $H$ defined by
\begin{equation}\label{OJU}
H_i=\{h\in H:h_\beta|_{\beta X\setminus U_i}=0\}
\end{equation}
for which we have $\lambda_{H_i}X=U_i$ by Lemma \ref{HJGF}. Note that
\[\lambda_HX=\bigcup_{i\in I}\lambda_{H_i}X=\lambda_{\sum_{i\in I}H_i}X=\lambda_{\overline{\sum_{i\in I}H_i}}X\]
by Lemmas \ref{UEED} and \ref{JH}, which implies that
\begin{equation}\label{HHG}
H=\overline{\sum_{i\in I}H_i}
\end{equation}
by Lemma \ref{HOOF}. First, we verify that
\begin{equation}\label{JB}
\sum_{i\in I}H_i=\bigoplus_{i\in I}H_i.
\end{equation}
We need to verify that
\[H_{i_0}\cap\sum_{i_0\neq i\in I}H_i=0\]
for all $i_0$ in $I$. Fix some $i_0$ in $I$ and let $f_{i_0}$ be in $H_{i_0}\cap\sum_{i_0\neq i\in I}H_i$. Then \[f_{i_0}=f_{i_1}+\cdots+f_{i_n},\]
where $i_j$ is in $I$ and $f_{i_j}$ is in $H_{i_j}$ for all $j=0,1,\ldots,n$. Observe that $f^\beta_{i_j}|_{\beta X\setminus U_{i_j}}=0$ by (\ref{OJU}) for all $j=1,\ldots,n$. This implies that
\[(f^\beta_{i_1}+\cdots+f^\beta_{i_n})|_{\beta X\setminus\bigcup_{j=1}^nU_{i_j}}=0.\]
But $f^\beta_{i_0}=f^\beta_{i_1}+\cdots+f^\beta_{i_n}$ and
\[U_{i_0}\subseteq\beta X\setminus\bigcup_{j=1}^nU_{i_j},\]
thus $f^\beta_{i_0}|_{U_{i_0}}=0$. On the other hand $f^\beta_{i_0}|_{\beta X\setminus U_{i_0}}=0$ by (\ref{OJU}), from which it follows that $f^\beta_{i_0}=0$ and therefore $f_{i_0}=0$. This shows (\ref{JB}). Note that by Lemma \ref{JHFF} for all $i$ in $I$ (since $\lambda_{H_i}X$ ($=U_i$) is Lindel\"{o}f) we have $H_i=\overline{\langle h_i\rangle}$ for some $h_i$ in $H$. Therefore, from (\ref{HHG}) and (\ref{JB}) it follows that
\[H=\overline{\bigoplus_{i\in I}\langle h_i\rangle}.\]

(2) \textit{implies} (1). We have
\[\lambda_HX=\lambda_{\overline{\sum_{i\in I}\langle h_i\rangle}}X=\lambda_{\sum_{i\in I}\langle h_i\rangle}X=\bigcup_{i\in I}\lambda_{\langle h_i\rangle}X=\bigcup_{i\in I}\mathrm{coz}(h_i^\beta)\]
by Lemmas \ref{UEED} and \ref{JH}. Note that $\mathrm{coz}(h_i^\beta)$ and $\mathrm{coz}(h_j^\beta)$ are disjoint, if $i$ and $j$ in $I$ are distinct, as $h_ih_j=0$ (and thus $h^\beta_ih^\beta_j=(h_ih_j)^\beta=0$) by the representation of $H$. Also, note that $\mathrm{coz}(h_i^\beta)$ is $\sigma$-compact (and thus, Lindel\"{o}f) for all $i$ in $I$, as
\[\mathrm{coz}(h_i^\beta)=\bigcup_{n=1}^\infty |h_i^\beta|^{-1}\big([1/n,\infty)\big).\]
Thus $\mathfrak{sp}(H)$ ($=\lambda_HX$) is the union of a collection of pairwise disjoint open Lindel\"{o}f subspaces, and is therefore paracompact.
\end{proof}

\section{On sets of closed ideals of $C_B(X)$}

In this concluding section for a completely regular space $X$ we study certain collections of closed ideals of $C_B(X)$, partially ordered under set-theoretic inclusion.

\begin{definition}\label{KH}
Let $X$ be a completely regular space.
\begin{itemize}
\item Denote
\[\mathscr{U}(X)=\{U:\mbox{$U$ is an open subspace of $\beta X$ containing $X$}\},\]
and
\[\mathscr{H}(X)=\big\{H:\mbox{$H$ is a non-vanishing closed ideal of $C_B(X)$}\big\}.\]
\item Let $H$ be in $\mathscr{H}(X)$. Denote
\[\mathbf{sub}(H)=\{G:\mbox{$G$ is a closed subideal of $H$}\}.\]
\end{itemize}
\end{definition}

Let $(P,\leq)$ and $(Q,\leq)$ be partially ordered sets. A mapping $f:P\rightarrow Q$ is called an \textit{order-homomorphism} if $f(x)\leq f(y)$ whenever $x\leq y$. A mapping $f:P\rightarrow Q$ is called an \textit{order-isomorphism} if it is a bijection and both $f$ and $f^{-1}$ are order-homomorphisms. The partially ordered sets $(P,\leq)$ and $(Q,\leq)$ are \textit{order-isomorphic} if there is an order-isomorphism between them.

\begin{lemma}\label{GFD}
Let $X$ be a completely regular space. Then $(\mathscr{H}(X),\subseteq)$ is order-isomorphic to $(\mathscr{U}(X),\subseteq)$.
\end{lemma}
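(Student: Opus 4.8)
The plan is to exhibit the natural map $\lambda$ sending a non-vanishing closed ideal $H$ of $C_B(X)$ to its spectrum $\lambda_H X = \mathfrak{sp}(H)$, and show it is an order-isomorphism from $(\mathscr{H}(X),\subseteq)$ onto $(\mathscr{U}(X),\subseteq)$. First I would check that $\lambda$ is well defined with the stated codomain: for $H$ in $\mathscr{H}(X)$, Theorem \ref{TRES} already tells us $\lambda_H X$ is an open subspace of $\beta X$ containing $X$ as a dense subspace, so $\lambda_H X$ lies in $\mathscr{U}(X)$. That $\lambda$ is an order-homomorphism is immediate from Definition \ref{GHG}: if $H_1\subseteq H_2$ then $\bigcup_{h\in H_1}\mathrm{coz}(h_\beta)\subseteq\bigcup_{h\in H_2}\mathrm{coz}(h_\beta)$.

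Next I would prove injectivity, which is exactly Lemma \ref{HOOF}: if $\lambda_{H_1}X=\lambda_{H_2}X$ for closed ideals $H_1,H_2$ then $H_1=H_2$. (Both are automatically non-vanishing once we know surjectivity, but in any case Lemma \ref{HOOF} applies directly to closed ideals.) The substantive step is surjectivity: given $U$ in $\mathscr{U}(X)$, I must produce a non-vanishing closed ideal $H$ with $\lambda_H X = U$. The natural candidate, motivated by Lemma \ref{HJGF}, is
\[H=\{h\in C_B(X): h_\beta|_{\beta X\setminus U}=0\}.\]
One checks readily that $H$ is a closed ideal of $C_B(X)$: it is a subalgebra and an ideal since $(fg)_\beta = f_\beta g_\beta$ vanishes on $\beta X\setminus U$ whenever one factor does, and it is closed because uniform convergence in $C_B(X)$ transfers to uniform convergence of the $\beta$-extensions on $\beta X$, so the vanishing condition on the closed set $\beta X\setminus U$ is preserved under limits.

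The two remaining points are that this $H$ is non-vanishing and that $\lambda_H X = U$. For non-vanishing: given $x\in X\subseteq U$, since $U$ is open in $\beta X$ and $\beta X$ is completely regular (indeed normal), choose a continuous $F:\beta X\to[0,1]$ with $F(x)=1$ and $F|_{\beta X\setminus U}=0$; then $F|_X\in H$ and does not vanish at $x$. The same construction shows $\lambda_H X\supseteq U$: for each $x\in U$ (not merely $x\in X$) the function $F|_X$ above lies in $H$ and $(F|_X)_\beta = F$ has $x$ in its cozeroset, so $x\in\mathrm{coz}\big((F|_X)_\beta\big)\subseteq\lambda_H X$. Conversely $\lambda_H X\subseteq U$ because every $h\in H$ has $h_\beta$ vanishing off $U$, so $\mathrm{coz}(h_\beta)\subseteq U$. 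Hence $\lambda_H X = U$, giving surjectivity. Finally, since $\lambda$ is a bijective order-homomorphism, to conclude it is an order-isomorphism I would note that $\lambda^{-1}$ is also an order-homomorphism: this can be seen either from the explicit formula just displayed for $\lambda^{-1}(U)$ (if $U_1\subseteq U_2$ then the vanishing condition off $U_2$ is weaker, so the corresponding ideals are nested), or abstractly from the general fact that an order-preserving bijection between partially ordered sets in which inclusion is the order, whose inverse is the one we just described, preserves order. I expect the only real obstacle to be bookkeeping in the surjectivity argument — specifically making sure the cozeroset equality $\lambda_H X = U$ is argued on both inclusions and that the completely-regular separation producing $F$ is invoked correctly; everything else is a direct appeal to Theorem \ref{TRES}, Lemma \ref{HOOF}, and Lemma \ref{HJGF}.
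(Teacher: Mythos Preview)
Your proposal is correct and follows essentially the same approach as the paper: define $\phi(H)=\lambda_H X$, check it lands in $\mathscr{U}(X)$ via Theorem~\ref{TRES}, prove surjectivity by taking $H=\{h\in C_B(X):h_\beta|_{\beta X\setminus U}=0\}$ and using complete regularity of $\beta X$ to produce the separating function $F$, and then verify the inverse preserves order. The only cosmetic difference is that the paper deduces order-preservation of $\phi^{-1}$ from the implication $\lambda_{H_1}X\subseteq\lambda_{H_2}X\Rightarrow H_1\subseteq H_2$ (via Lemmas~\ref{UEED}, \ref{HOOF}, \ref{JH}) rather than from the explicit formula for $\phi^{-1}$, but this is a matter of taste.
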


\begin{proof}
Let the mapping
\[\phi:\big(\mathscr{H}(X),\subseteq\big)\longrightarrow\big(\mathscr{U}(X),\subseteq\big)\]
be defined by
\[\phi(H)=\lambda_HX.\]
Note that by Theorem \ref{TRES} this indeed defines a mapping. Observe that $\phi$ is surjective, as for any $U$ in $\mathscr{U}(X)$ the set
\[H=\big\{f\in C_B(X):f_\beta|_{\beta X\setminus U}=0\big\}\]
is in $\mathscr{H}(X)$ and $\lambda_HX=U$ by a slight modification of the proof which applies to Lemma \ref{HJGF}. (We need to argue that $H$ is non-vanishing. Let $x$ be in $X$. Then $x$ is in $U$. Let $F:\beta X\rightarrow[0,1]$ be a continuous mapping such that $F(x)=1$ and $F|_{\beta X\setminus U}=0$. Let $f=F|_X$. Note that $f_\beta=F$, as the two mappings are continuous and coincide on the dense subspace $X$ of $\beta X$. Then $f$ is in $H$ and $f(x)=F(x)\neq0$.) It is clear that $\phi$ is an order-homomorphism. Also, for any $H_1$ and $H_2$ in $\mathscr{H}(X)$, if $\lambda_{H_1}X\subseteq\lambda_{H_2}X$, then
\[\lambda_{H_2}X=\lambda_{H_1}X\cup\lambda_{H_2}X=\lambda_{H_1+H_2}X=\lambda_{\overline{H_1+H_2}}X,\]
which implies that $H_2=\overline{H_1+H_2}$, and therefore $H_1\subseteq H_2$. (We have used Lemmas \ref{UEED}, \ref{HOOF} and \ref{JH}.) This in particular shows that $\phi$ is an injection (and thus is a bijection) and therefore is an order-isomorphism.
\end{proof}

Let $(P,\leq)$ be a partially ordered set. Then $(P,\leq)$ is a \textit{lattice} if together with every pair $p$ and $q$ of its elements it contains their least upper bound $p\vee q$ and their greatest lower bound $p\wedge q$. Also, $(P,\leq)$ is a \textit{complete upper semi-lattice} if together with every collection $\{p_i:i\in I\}$ of its elements it contains their least upper bound $\bigvee_{i\in I}p_i$.

\begin{theorem}\label{JHGH}
Let $X$ be a completely regular space. Then $(\mathscr{H}(X),\subseteq)$ is a lattice which is also a complete upper semi-lattice. Indeed, for any two elements $H_1$ and $H_2$ of $\mathscr{H}(X)$ and any subcollection $\{H_i:i\in I\}$ of elements of $\mathscr{H}(X)$ we have
\[H_1\wedge H_2=H_1\cap H_2\quad\mbox{and}\quad\bigvee_{i\in I}H_i=\overline{\Big\langle\bigcup_{i\in I}H_i\Big\rangle}.\]
Here the bar denotes the closure in $C_B(X)$.
\end{theorem}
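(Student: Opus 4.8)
The plan is to transport the assertion through the order-isomorphism $\phi\colon(\mathscr{H}(X),\subseteq)\to(\mathscr{U}(X),\subseteq)$ given by $\phi(H)=\lambda_HX$ (Lemma \ref{GFD}). The point is that the target poset $(\mathscr{U}(X),\subseteq)$ is manifestly a lattice and a complete upper semi-lattice: a topology is closed under finite intersections and under arbitrary unions, and if every set in a family of open sets contains $X$, then so do their intersection and their union; hence for $U_1,U_2\in\mathscr{U}(X)$ the greatest lower bound is $U_1\cap U_2$, and for a nonempty family $\{U_i:i\in I\}$ in $\mathscr{U}(X)$ the least upper bound is $\bigcup_{i\in I}U_i$. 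Since an order-isomorphism carries greatest lower bounds to greatest lower bounds and least upper bounds to least upper bounds, $(\mathscr{H}(X),\subseteq)$ is likewise a lattice and a complete upper semi-lattice, with
\[H_1\wedge H_2=\phi^{-1}(\lambda_{H_1}X\cap\lambda_{H_2}X)\quad\text{and}\quad\bigvee_{i\in I}H_i=\phi^{-1}\Big(\bigcup_{i\in I}\lambda_{H_i}X\Big).\]

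It then remains only to identify these preimages with the ideals named in the statement. For each candidate ideal $K$ this is two routine checks: that $K\in\mathscr{H}(X)$, and that $\phi(K)$ equals the relevant member of $\mathscr{U}(X)$. Here I would use the elementary observation that a closed ideal $K$ of $C_B(X)$ is non-vanishing if and only if $X\subseteq\lambda_KX$ (for $x\in X$, membership of $x$ in $\mathrm{coz}(k_\beta)$ for some $k\in K$ is the same as $k(x)\neq0$ for some $k\in K$).

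For the meet: $H_1\cap H_2$ is plainly a closed ideal, and Lemma \ref{HFDS} gives $\lambda_{H_1\cap H_2}X=\lambda_{H_1}X\cap\lambda_{H_2}X$, which contains $X$ because $X\subseteq\lambda_{H_1}X$ and $X\subseteq\lambda_{H_2}X$ by Theorem \ref{TRES}; hence $H_1\cap H_2\in\mathscr{H}(X)$ and $\phi(H_1\cap H_2)=\lambda_{H_1}X\cap\lambda_{H_2}X$, so $H_1\wedge H_2=H_1\cap H_2$. For the join: since each $H_i$ is already an ideal, the ideal generated by $\bigcup_{i\in I}H_i$ is $\sum_{i\in I}H_i$, and the closure of an ideal in the Banach algebra $C_B(X)$ is again an ideal, so $\overline{\langle\bigcup_{i\in I}H_i\rangle}=\overline{\sum_{i\in I}H_i}$ is a closed ideal; by Lemmas \ref{UEED} and \ref{JH},
\[\phi\Big(\overline{\langle\bigcup_{i\in I}H_i\rangle}\Big)=\lambda_{\overline{\sum_{i\in I}H_i}}X=\lambda_{\sum_{i\in I}H_i}X=\bigcup_{i\in I}\lambda_{H_i}X\supseteq X,\]
so $\overline{\langle\bigcup_{i\in I}H_i\rangle}\in\mathscr{H}(X)$ and its $\phi$-image is $\bigcup_{i\in I}\lambda_{H_i}X$; therefore $\bigvee_{i\in I}H_i=\overline{\langle\bigcup_{i\in I}H_i\rangle}$.

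I do not anticipate a genuine obstacle. The only points deserving care are (i) verifying that $\mathscr{H}(X)$ is closed under these two operations, i.e.\ that the intersection and the closed generated ideal remain non-vanishing --- handled by the ``$X\subseteq\lambda_KX$'' criterion together with Lemmas \ref{HFDS} and \ref{JH} --- and (ii) the convention that the index set $I$ of a least upper bound be nonempty, since $\mathscr{U}(X)$, and hence $\mathscr{H}(X)$, generally has no least element (it does, however, have the greatest element $\beta X$, resp.\ $C_B(X)$).
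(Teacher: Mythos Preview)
Your proposal is correct and follows exactly the route the paper takes: the paper's entire proof is the single sentence ``This follows from Lemma \ref{GFD},'' and you have spelled out precisely the details that sentence leaves implicit, transporting the lattice structure through the order-isomorphism $\phi$ and then identifying the meet and join via Lemmas \ref{HFDS}, \ref{UEED}, and \ref{JH}.
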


\begin{proof}
This follows from Lemma \ref{GFD}.
\end{proof}

It is known that the order-structure of the set of all closed subspaces of any Hausdorff space (partially ordered under set-theoretic inclusion) determines and is determined by its topology. (See Theorem 11.1 of \cite{B}.) We use this fact in the following.

\begin{theorem}\label{JGSD}
Let $X$ be a completely regular space. Let $H$ and $H'$ be in $\mathscr{H}(X)$. The following are equivalent:
\begin{itemize}
\item[(1)] $\mathfrak{sp}(H)$ and $\mathfrak{sp}(H')$ are homeomorphic.
\item[(2)] $\mathbf{sub}(H)$ and $\mathbf{sub}(H')$ are order-isomorphic.
\end{itemize}
\end{theorem}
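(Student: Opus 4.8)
The plan is to show each direction separately, using Lemma~\ref{HJGF} to translate between closed subideals of $H$ and open subspaces of $\mathfrak{sp}(H)$, and then invoke Theorem~11.1 of \cite{B} on the level of closed sets.

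First I would establish the order-theoretic machinery. By Lemma~\ref{HJGF}, the assignment $G\mapsto\lambda_GX$ is a bijection between $\mathbf{sub}(H)$ and the set $\mathcal{O}(\mathfrak{sp}(H))$ of open subspaces of $\mathfrak{sp}(H)$, with inverse $U\mapsto\{h\in H:h_\beta|_{\beta X\setminus U}=0\}$; moreover it is an order-isomorphism, since $G_1\subseteq G_2$ implies $\lambda_{G_1}X\subseteq\lambda_{G_2}X$ trivially, and the converse implication was verified in the proof of Theorem~\ref{JGDSS} (if $\lambda_{G_1}X\subseteq\lambda_{G_2}X$ then $\lambda_{G_2}X=\lambda_{\overline{G_1+G_2}}X$, so $G_2=\overline{G_1+G_2}$ by Lemma~\ref{HOOF}, whence $G_1\subseteq G_2$). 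Thus $(\mathbf{sub}(H),\subseteq)$ is order-isomorphic to $(\mathcal{O}(\mathfrak{sp}(H)),\subseteq)$. The same holds for $H'$. Finally, passing to complements inside the fixed ambient space $\mathfrak{sp}(H)$ gives an order-anti-isomorphism between $(\mathcal{O}(\mathfrak{sp}(H)),\subseteq)$ and the lattice $\mathcal{C}(\mathfrak{sp}(H))$ of closed subspaces of $\mathfrak{sp}(H)$, and likewise for $H'$; composing, $(\mathbf{sub}(H),\subseteq)$ is order-anti-isomorphic to $(\mathcal{C}(\mathfrak{sp}(H)),\subseteq)$.

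For (1) $\Rightarrow$ (2): a homeomorphism $\mathfrak{sp}(H)\to\mathfrak{sp}(H')$ induces an order-isomorphism $\mathcal{C}(\mathfrak{sp}(H))\to\mathcal{C}(\mathfrak{sp}(H'))$ by taking images, hence (composing with the anti-isomorphisms above, which cancel in parity) an order-isomorphism $\mathbf{sub}(H)\to\mathbf{sub}(H')$. For (2) $\Rightarrow$ (1): an order-isomorphism $\mathbf{sub}(H)\to\mathbf{sub}(H')$ transports, via the two anti-isomorphisms, to an order-isomorphism $\mathcal{C}(\mathfrak{sp}(H))\to\mathcal{C}(\mathfrak{sp}(H'))$. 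Now Theorem~11.1 of \cite{B} asserts that for Hausdorff spaces the poset of closed sets determines the topology up to homeomorphism — more precisely, that any order-isomorphism between the closed-set lattices of two (suitably separated) spaces is induced by a homeomorphism. Since $\mathfrak{sp}(H)$ and $\mathfrak{sp}(H')$ are locally compact Hausdorff (Theorem~\ref{TRES}), the hypotheses of that theorem are met, and we conclude $\mathfrak{sp}(H)$ and $\mathfrak{sp}(H')$ are homeomorphic.

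The main obstacle I anticipate is a bookkeeping subtlety rather than a deep one: checking that the cited result of \cite{B} applies in exactly the form needed — it is usually stated for $T_1$ spaces with the closed-set lattice, and one must make sure the direction "isomorphism of lattices $\Rightarrow$ homeomorphism" (not merely "homeomorphism $\Rightarrow$ isomorphism") is the content invoked, and that local compactness or at least Hausdorffness suffices. A secondary point to handle carefully is the parity of anti-isomorphisms: since we pass through complementation on both sides, the composite is a genuine (covariant) order-isomorphism, so no difficulty arises there, but it should be stated explicitly. Everything else — the translation via Lemma~\ref{HJGF} and the order-isomorphism claims — is routine given the lemmas already proved.
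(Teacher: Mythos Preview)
Your proposal is correct and follows essentially the same route as the paper: establish via Lemma~\ref{HJGF} (with the order-reversal argument you cite, which the paper attributes instead to the proof of Lemma~\ref{GFD}) that $\mathbf{sub}(H)$ is order-isomorphic to the lattice of open subspaces of $\mathfrak{sp}(H)$, then invoke Theorem~11.1 of \cite{B} after passing to closed sets by complementation. The paper's proof is simply a terser version of yours, omitting the explicit parity bookkeeping and the separate treatment of the two directions that you spell out.
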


\begin{proof}
By Lemma \ref{HJGF} (and Lemmas \ref{UEED}, \ref{HOOF} and \ref{JH}, as we have argued in the proof of Lemma \ref{GFD}) the set of all open subspaces of $\mathfrak{sp}(H)$ and $\mathfrak{sp}(H')$ are order-isomorphic to $\mathbf{sub}(H)$ and $\mathbf{sub}(H')$, respectively, all partially ordered under set-theoretic inclusion. The theorem now follows, as the order-structure of the set of all closed (and thus all open) subspaces of any Hausdorff space determines and is determined by its topology.
\end{proof}

\begin{theorem}\label{IJJ}
Let $X$ and $Y$ be completely regular spaces. Every continuous mapping $f:X\rightarrow Y$ induces a (contravariant) order-homomorphism $\phi:(\mathscr{H}(Y),\subseteq)\rightarrow(\mathscr{H}(X),\subseteq)$ which is injective if $f$ is surjective.
\end{theorem}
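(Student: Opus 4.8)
The plan is to build $\phi$ from the Stone--\v{C}ech functor together with the order-isomorphism $\mathscr{H}(\cdot)\cong\mathscr{U}(\cdot)$ established in Lemma \ref{GFD}. First I would observe that the composition of $f$ with the inclusion $Y\hookrightarrow\beta Y$ is a continuous mapping of $X$ into a compact Hausdorff space, hence extends (by functoriality of $\beta$; see \cite{GJ}) to a continuous mapping $\beta f\colon\beta X\rightarrow\beta Y$ with $\beta f|_X=f$; in particular $\beta f(X)=f(X)\subseteq Y$. The identity on which everything rests is that, for each $h$ in $C_B(Y)$, the mapping $h_\beta\circ\beta f$ is continuous on $\beta X$ and agrees with $h\circ f$ on the dense subspace $X$, so $(h\circ f)_\beta=h_\beta\circ\beta f$ and therefore
\[\mathrm{coz}\big((h\circ f)_\beta\big)=(\beta f)^{-1}\big(\mathrm{coz}(h_\beta)\big).\]

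Next I would define, for $H$ in $\mathscr{H}(Y)$, the ideal $\phi(H)$ to be the closure in $C_B(X)$ of the ideal generated by $\{h\circ f:h\in H\}$. This is a closed ideal of $C_B(X)$ by construction, and it is non-vanishing: given $x$ in $X$, the point $f(x)$ lies in $Y$, so some $h$ in $H$ satisfies $h(f(x))\neq0$, and then the generator $h\circ f$ (an element of $\phi(H)$, as $C_B(X)$ is unital) does not vanish at $x$. Hence $\phi(H)$ belongs to $\mathscr{H}(X)$ and $\phi\colon\mathscr{H}(Y)\rightarrow\mathscr{H}(X)$ is well defined. Combining Lemmas \ref{UEED} and \ref{JH} with the displayed cozero identity and the fact that $\lambda_{\langle g\rangle}X=\mathrm{coz}(g_\beta)$, I would then compute
\[\lambda_{\phi(H)}X=\bigcup_{h\in H}\mathrm{coz}\big((h\circ f)_\beta\big)=\bigcup_{h\in H}(\beta f)^{-1}\big(\mathrm{coz}(h_\beta)\big)=(\beta f)^{-1}\Big(\bigcup_{h\in H}\mathrm{coz}(h_\beta)\Big)=(\beta f)^{-1}\big(\lambda_HY\big).\]
So, under the order-isomorphisms of Lemma \ref{GFD} for $X$ and for $Y$, $\phi$ corresponds to the map $\mathscr{U}(Y)\rightarrow\mathscr{U}(X)$ given by $U\mapsto(\beta f)^{-1}(U)$, which is well defined because $\beta f(X)\subseteq Y\subseteq U$ forces $X\subseteq(\beta f)^{-1}(U)$.

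The two remaining assertions then become formal. If $H_1\subseteq H_2$ in $\mathscr{H}(Y)$ then $\lambda_{H_1}Y\subseteq\lambda_{H_2}Y$ directly from Definition \ref{GHG}, hence $\lambda_{\phi(H_1)}X=(\beta f)^{-1}(\lambda_{H_1}Y)\subseteq(\beta f)^{-1}(\lambda_{H_2}Y)=\lambda_{\phi(H_2)}X$, and then $\phi(H_1)\subseteq\phi(H_2)$ by the argument used in the proof of Lemma \ref{GFD} (via Lemmas \ref{UEED}, \ref{JH} and \ref{HOOF}); thus $\phi$ is an order-homomorphism, contravariant in the sense that it reverses $f\colon X\rightarrow Y$ to $\phi\colon\mathscr{H}(Y)\rightarrow\mathscr{H}(X)$. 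If moreover $f$ is surjective, then $\beta f(\beta X)$ is compact, hence closed in $\beta Y$, and contains the dense set $f(X)=Y$, so $\beta f$ is surjective and consequently $\beta f\big((\beta f)^{-1}(A)\big)=A$ for every $A\subseteq\beta Y$; so $\phi(H_1)=\phi(H_2)$ gives $(\beta f)^{-1}(\lambda_{H_1}Y)=(\beta f)^{-1}(\lambda_{H_2}Y)$, whence $\lambda_{H_1}Y=\lambda_{H_2}Y$ and $H_1=H_2$ by Lemma \ref{HOOF}. The main obstacle is essentially confined to the preliminary step --- producing $\beta f$, which uses functoriality of $\beta$ rather than only the scalar-valued extension property recalled in the introduction, and confirming that $(h\circ f)_\beta=h_\beta\circ\beta f$; once the identity $\lambda_{\phi(H)}X=(\beta f)^{-1}(\lambda_HY)$ is in hand, the order-homomorphism property and injectivity follow from the order-theoretic machinery already developed in this section with no further work.
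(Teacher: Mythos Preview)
Your proof is correct and follows essentially the same route as the paper: both reduce $\phi$ to the preimage map $V\mapsto f_\beta^{-1}(V)$ from $\mathscr{U}(Y)$ to $\mathscr{U}(X)$ and then invoke the order-isomorphism of Lemma~\ref{GFD}, with the surjectivity of $f_\beta$ (when $f$ is surjective) yielding injectivity. The only difference is that you supply an explicit ideal-level formula $\phi(H)=\overline{\langle h\circ f:h\in H\rangle}$ and verify via the identity $(h\circ f)_\beta=h_\beta\circ\beta f$ that it corresponds to this preimage map, whereas the paper simply defines $\psi$ on $\mathscr{U}$ and transports it through Lemma~\ref{GFD} without writing $\phi$ out concretely.
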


\begin{proof}
Let
\[\psi:\big(\mathscr{U}(Y),\subseteq\big)\longrightarrow\big(\mathscr{U}(X),\subseteq\big)\]
be defined by
\[\psi(V)=f_\beta^{-1}(V).\]
Note that this defines a mapping, as $f_\beta^{-1}(V)$ is open in $\beta X$ (since it is the inverse image of an open subspace of $\beta Y$) and contains $X$ (since $V$ contains $Y$). It is clear that $\phi$ preserves order and is therefore an order-homomorphism. Note that if $f$ is surjective then $f_\beta:\beta X\rightarrow\beta Y$ is also surjective. In particular $f_\beta(f_\beta^{-1}(V))=V$ for every $V$ in $\mathscr{U}(Y)$ which shows that $\psi$ is injective. Note that $(\mathscr{H}(Y),\subseteq)$ and $(\mathscr{H}(X),\subseteq)$ are respectively order-isomorphic to $(\mathscr{U}(Y),\subseteq)$ and $(\mathscr{U}(X),\subseteq)$ by Lemma \ref{GFD}. This concludes the proof.
\end{proof}

\subsubsection*{Acknowledgements}

The authors wish to thank the anonymous referee for his/her careful reading of the manuscript, helpful suggestions and comments, and the prompt report sent within less than two weeks from submission of the article.

\end{document}